\numberwithin{equation}{section}
\newcommand{\Z}{\mathbb{Z}}
\newcommand{\Q}{\mathbb{Q}}
\newcommand{\R}{\mathbb{R}}
\newcommand{\C}{\mathbb{C}}
\newcommand{\A}{\mathbb A}
\newcommand\op\operatorname
\newcommand{\mi}{\,\mathrm{i}}
\newcommand{\cl}{\mathcal J} % Class group
\newcommand{\dist}{d}
\newcommand{\norm}[1]{N_K(\langle #1  \rangle)}
\newcommand{\MM}{\mathcal{M}}
\newtheorem{theorem}{Theorem}[section]
\newtheorem{lemma}[theorem]{Lemma}
\newtheorem{proposition}[theorem]{Proposition}
\theoremstyle{definition}
\newtheorem{definition}[theorem]{Definition}
\theoremstyle{remark}
\newtheorem{remark}[theorem]{Remark}
\title[Projective Hermite constant]{A generalized Hermite constant for imaginary quadratic fields}
\author{Wai Kiu Chan, Mar\'ia In\'es Icaza, \and Emilio A. Lauret}
\address{Wai Kiu Chan, Department of Mathematics and Computer science, Wesleyan University, Middletown, CT 06459--0128, USA.}
\email{wkchan@wesleyan.edu}
\address{Mar\'ia In\'es Icaza, Instituto de Matem\'atica y F\'isica, Universidad de Talca, Casilla 747, Talca, Chile.}
\email{icazap@inst-mat.utalca.cl}
\address{Emilio Agust\'{\i}n Lauret, FaMAF-CIEM, Universidad Nacional de C\'ordoba, Ciudad Universitaria 5000, C\'ordoba, Argentina.}
\email{elauret@famaf.unc.edu.ar}
\subjclass[2010]{Primary 11H50, 11H55}
\keywords{minima of hermitian forms, extreme hermitian forms, Hermite constant}
\begin{document}

\begin{abstract}
We introduce the \emph{projective Hermite constant} for positive definite binary hermitian forms associated with an imaginary quadratic number field $K$.
It is a lower bound for the classical Hermite constant, and these two constants coincide when $K$ has class number one.  Using the geometric tools developed by Mendoza \cite{Mendoza} and Vogtmann \cite{Vogtmann} for their study of the homology of the Bianchi groups, we compute the projective Hermite constants for those $K$ whose absolute discriminants are less than 70, and determine the hermitian forms that attain the projective Hermite constants in these cases.  A comparison of the projective hermitian constant with some other generalizations of the classical Hermite constant is also given.
\end{abstract}

\maketitle

\section{Introduction}

Let $K$ be an imaginary quadratic field and $\mathcal O_K$ be its ring of integers.  We regard $K$ as a subfield of $\C$ by identifying $K$ with its image under an embedding into $\C$.  For any binary hermitian form $S$ and any column vector $v \in \C^2$, $S(v)$ denotes the value of $S$ at $v$.  The (2-dimensional) Hermite constant $\gamma_K$ for $K$ is defined as
\begin{equation}\label{eq1:gamma_K}
\gamma_K = \max_{S \in \mathcal P} \min_{v \in \mathcal O_K^2\setminus \{0\}}\frac{ S(v)}{\det(S)^{1/2}},
\end{equation}
where $\mathcal P$ is the set of all positive definite binary hermitian forms.  In the subsequent discussion we often identify a binary hermitian form $ax\bar x + b\bar x y + \bar b x\bar y + cy\bar y$ with the $2\times 2$ hermitian matrix $\left(\begin{smallmatrix} a & b\\ \bar b& c \end{smallmatrix}\right)$, and  the determinant of a hermitian form will be the determinant of its associated hermitian matrix.  The constant $\gamma_K$ has been studied by many authors, and we will give a brief summary of some of the earlier results later in this section.

From now on, all hermitian forms are assumed to be positive definite.  As an attempt to take the ideal class group of $K$ into consideration, we define the {\em projective minimum} of a binary hermitian form $S$ to be
$$\mu_K^p(S):= \min_{v\in \mathcal O_K^2\setminus\{0\}} \frac{S(v)}{N_K(\langle v \rangle)},$$
where $N_K(\langle v\rangle)$ denotes the norm of the ideal generated by the coordinates of $v$.  As an analogy of \eqref{eq1:gamma_K}, we define the \emph{projective Hermite constant} $\gamma_K^p$ of $K$ by
\begin{equation}\label{eq1:gamma_K^p}
  \gamma_K^p: = \max_{S\in\mathcal P} \frac{\mu_K(S)}{\det(S)^{1/2}} = \max_{S\in\mathcal P} \min_{v\in \mathcal O_K^2\setminus\{0\}} \frac{S(v)}{N_K(\langle v \rangle)\det(S)^{1/2}}.
\end{equation}
It is clear that in both \eqref{eq1:gamma_K} and \eqref{eq1:gamma_K^p}, the maximum can be taken over only those $S$ with determinant 1.

It is not hard to see that $\gamma_K^p\leq\gamma_{K}$, and the equality holds when the class number $h_K$ of $K$ is one.   However, these two constants could be different when $h_K$ is greater than one, and $\gamma_K^p$ becomes a nontrivial lower bound for $\gamma_{K}$ in these cases.

A binary hermitian form $S$ is called \emph{projective extreme} (with respect to $K$) if $\gamma_K^p$ attains a local maximum at $S$, and is called \emph{absolutely projective extreme} if the maximum is absolute.

Suppose that $K = \Q(\sqrt{-D})$, where $D$ is a square-free positive integer.   An integral basis for $\mathcal O_K$ is given by $\{1, \omega\}$, where $\omega = \sqrt{-D}$ or $(1 + \sqrt{-D})/2$ if $D \equiv 1,2$ mod 4 or $D \equiv 3$ mod 4 accordingly.  We denote the discriminant of $K$ by $d_K$.  The constant $\gamma_K$ has been already determined for many $D$.  The case $D=1$ was due to Speiser~\cite{Speiser}, which was extended by Perron~\cite{Perron} to the cases $D=1,2,3,7,11,19$.  Oberseider~\cite{Oberseider} treated all imaginary quadratic fields of class number one by completing the cases $D=43,67,163$, and he completed the additional cases $D=5,6,10,13,15$ (class number 2) and $D=14,17$ (class number 4).

On the other hand, since $\mathcal O_K$ is a free $\mathbb Z$-module of rank two, a binary hermitian form over $K$ of determinant $\Delta$ can be viewed as a quaternary quadratic form over $\mathbb Q$ of determinant $(\Delta\vert d_K\vert)^2/16$.  This idea was exploited by Oppenheim~\cite{Oppenheim} to show that
\begin{equation} \label{eq1:Oppenheim-inequality}
\gamma_{K} \leq \sqrt{\frac{\vert d_K\vert}{2}},
\end{equation}
and that the equality holds if and only if $-2$ is a quadratic residue modulo $D$ or, equivalently, every odd prime divisor of $D$ is congruent to 1 or 3 modulo 8.

Since $\gamma_K^p = \gamma_{K}$ for every $K$ of class number one, $\gamma_K^p$ is determined for the nine imaginary quadratic number fields of class number one.  Our main result is the computation of $\gamma_K^p$ and the corresponding projective extreme forms for every imaginary quadratic field $K$ with class number not one and $|d_K|<70$.

\begin{theorem}\label{thm1:ctes}
Let $K$ be an imaginary quadratic number field whose absolute discriminant is less than $70$.
Then $\gamma_K^p$ and the associated absolutely projective extreme forms of determinant $1$, modulo the action of $\mathrm{SL}_2(\mathcal O_K)$, are given in Table~\ref{table1:Hermite-ctes} and Table~\ref{table critical points}, respectively.
\end{theorem}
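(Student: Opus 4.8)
The plan is to convert the extremal problem defining $\gamma_K^p$ into a problem about the Bianchi group $\mathrm{SL}_2(\mathcal O_K)$ acting on hyperbolic $3$-space, and then to read off the answer from the explicit equivariant cell complexes of Mendoza and Vogtmann. First I would recall the standard identification of positive definite binary hermitian forms of determinant $1$ with points of $\H^3=\{(z,t):z\in\C,\ t>0\}$: to $(z,t)$ one attaches the form $S_{(z,t)}$ with $S_{(z,t)}(x,y)=t^{-1}|x-zy|^2+t\,|y|^2$, which indeed has determinant $1$, and under this bijection the substitution $S\mapsto g^{*}Sg$ for $g\in\mathrm{SL}_2(\mathcal O_K)$ becomes the isometric M\"obius action of the Bianchi group on $\H^3$. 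Two invariance facts make the reduction work: $\det(g^{*}Sg)=\det S$, and, because $g$ and $g^{-1}$ are both integral, $g$ carries the ideal $\langle v\rangle$ generated by the coordinates of $v$ onto $\langle gv\rangle$, so that $N_K(\langle gv\rangle)=N_K(\langle v\rangle)$. Together with $(g^{*}Sg)(v)=S(gv)$ these give $\mu_K^p(g^{*}Sg)=\mu_K^p(S)$, so $\gamma_K^p$ is the maximum of a well-defined function on the quotient $\H^3/\mathrm{SL}_2(\mathcal O_K)$.

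Next I would give the geometric meaning of $\mu_K^p$. The cusps of the quotient are in bijection with $\mathbb P^1(K)/\mathrm{SL}_2(\mathcal O_K)\cong\mathrm{Cl}(K)$, with a primitive vector $v$ pointing toward the cusp determined by the class of $\langle v\rangle$; the weight $N_K(\langle v\rangle)$ is exactly the renormalization that makes the natural horoball at each cusp have the same size, which is why the \emph{projective} minimum, rather than the classical one, is the quantity adapted to this geometry. For fixed primitive $v$ the function $S\mapsto S(v)/N_K(\langle v\rangle)$ is a height function attached to the associated horoball, $\mu_K^p$ is their pointwise minimum, and a point of $\H^3$ is a local maximum of $\mu_K^p/\det^{1/2}$ precisely when several of these horoballs are simultaneously lowest over it — the \emph{projective perfect} forms, whose defining vectors span the space of hermitian forms. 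These are exactly the vertices ($0$-cells) of the Mendoza--Vogtmann cell complex.

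To see that the global maximum occurs among these vertices, I would note that on approaching any cusp the vector pointing at that cusp becomes short, so $\mu_K^p/\det^{1/2}\to 0$; combined with the a priori finiteness $\gamma_K^p\le\gamma_K\le\sqrt{|d_K|/2}$, this forces the maximum to be attained in the compact part of the quotient, hence on the spine, and by the previous paragraph at one of its finitely many vertices.

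The remaining work is a finite computation carried out field by field. For each $K$ with $|d_K|<70$ and $h_K\neq 1$, the data of Mendoza and Vogtmann supply an explicit finite list of vertex forms in a fundamental domain; for each such $S$ I would determine its minimal vectors together with the norms of the ideals they generate, evaluate $\mu_K^p(S)/\det(S)^{1/2}$, take the largest value to obtain $\gamma_K^p$, and identify the vertices realizing it as the absolutely projective extreme forms of Table~\ref{table critical points}, checking eutaxy to confirm that each is indeed a local (hence absolute) maximum. The conceptual heart of the argument — and the main obstacle — is the reduction in the second and third paragraphs: proving the projective analogue of Voronoi's theorem, that local maxima of the projective Hermite invariant occur only at the spine vertices, and correctly matching the weight $N_K(\langle v\rangle)$ to the non-principal cusps so that the Mendoza--Vogtmann horoball complex is the right bookkeeping device. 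Once this is in place, the per-field evaluations are routine though laborious.
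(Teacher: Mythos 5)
Your proposal is correct and follows essentially the same route as the paper: the equivariant bijection between determinant-one hermitian forms and points of $\mathrm{H}^3$, the reformulation of $\mu_K^p(S)/\det(S)^{1/2}$ as Mendoza's normalized distance to cusps, the localization of the maxima at the vertices of the minimal incidence set, and a finite per-field evaluation of the invariant at the vertices of fundamental domains obtained from Vogtmann's machinery. The only differences are cosmetic: the paper invokes Mendoza's result that local maxima of $\eta_K$ occur at vertices (so no projective Voronoi analogue or eutaxy check is needed), and it constructs and verifies the cellular fundamental domains itself, cusp class by cusp class, via Vogtmann's criterion together with symmetry tricks, rather than treating that vertex data as already supplied.
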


In Table~\ref{table1:Hermite-ctes}, we also record the value of $\gamma_K$ whenever it is known for the sake of comparison.  For the convenience of presentation, in Table~\ref{table critical points} we use $[a,b,c]$ to denote the hermitian form $\left(\begin{smallmatrix} a & b\\\bar b & c\end{smallmatrix} \right)$.  Every absolutely projective extreme form in that table attains its projective minimum at $\left(\begin{smallmatrix}1\\0\end{smallmatrix}\right)$, except when it has a column vector $v$ as a subscript.  In that case, the projective minimum of that form is attained at $v$.

%\smallskip

\begin{table}[!htb]
\caption{Projective Hermite constants for $\vert d_K \vert <70$}
\label{table1:Hermite-ctes}
$
\begin{array}{|ccccc|}\hline
-d_K& D & h_K& \gamma_K^p & \gamma_{K}    \\ \hline %\hline
\rule{0pt}{12pt}
3   &3  &1  &\sqrt{3/2}    & \sqrt{3/2}        \\
4   &1  &1  &\sqrt2      & \sqrt2          \\
7   &7  &1  &\sqrt{7/3}    &\sqrt{7/3}        \\
8   &2  &1  & 2     & 2         \\
11  &11 &1  &\sqrt{11/2}   & \sqrt{11/2}       \\
15  &15 &2  &\sqrt3      &2\sqrt{5/3}       \\
19  &19 &1  &\sqrt{19/2}   &\sqrt{19/2}       \\
20  &5  &2  &\sqrt5      &4\sqrt{5/11}     \\
23  &23 &3  &\sqrt{23/5}   &           \\
24  &6  &2  &2\sqrt3    &2\sqrt3        \\
31  &31 &3  &\sqrt{31/3}   &           \\
35  &35 &2  &\sqrt7      &           \\ \hline
\end{array} \qquad
\begin{array}{|ccccc|}\hline
-d_K& D & h_K& \gamma_K^p & \gamma_{K}    \\ \hline
\rule{0pt}{12pt}
39  &39 &4  &\sqrt{13}     &          \\
40  &10 &2  &6\sqrt{5/13} &8\sqrt{10/39}    \\
43  &43 &1  &\sqrt{43/2}   & \sqrt{43/2}      \\
47  &47 &5  &\sqrt{47/5}  &          \\
51  &51 &2  &\sqrt{51/2}   & \sqrt{51/2}     \\
52  &13 &2  &\sqrt{52/3}   &8\sqrt{3/35}    \\
55  &55 &4  &4\sqrt{11/19} &          \\
56  &14 &4  &2\sqrt{7/3}   &8\sqrt{14/41}   \\
59  &59 &3  &\sqrt{59/2}   & \sqrt{59/2}      \\
67  &67 &1  &\sqrt{67/2}   & \sqrt{67/2}      \\
68  &17 &4  &\sqrt{34}     &\sqrt{34}        \\
163 &163&1  &\sqrt{163/2}  & \sqrt{163/2}     \\ \hline
\end{array}
$
\end{table}
\smallskip

\begin{table}[!htb]
\caption{Absolutely projective extreme forms  for $\vert d_K\vert < 70$}%
\label{table critical points}%
$
\begin{array}[t]{|ccc|}\hline
-d_K&h_K & \text{abs.\ proj.\ extreme forms} \\ \hline
\rule{0pt}{12pt}
3&1&
  [\frac{\sqrt{2}}{\sqrt{3}}, \,\pm \frac{i}{\sqrt{2}}, \,\frac{\sqrt{2}}{\sqrt{3}}]\\ \hline
4&1&
  [\sqrt{2},\, \pm \frac{1+\mi}{\sqrt{2}}, \,\sqrt{2}]\\ \hline
7&1&
  [\frac{\sqrt7}{\sqrt3}, \,\pm\frac{2\mi}{\sqrt3}, \,\frac{\sqrt7}{\sqrt3}]\\ \hline
8&1&
 [2, \,\pm\frac{\sqrt2 + 2\mi}{\sqrt2}, \,2]\\ \hline
11&1&
  [\frac{\sqrt{11}}{\sqrt2},\, \pm\frac{3\mi}{\sqrt2}, \,\frac{\sqrt{11}}{\sqrt2}]\\ \hline
15&2& [\sqrt3, \, \pm \sqrt5 \mi, \, 2\sqrt3]\\ \hline
19&1& [\frac{\sqrt{19}}{\sqrt2},\, \pm \frac{6\mi}{\sqrt2}, \, \frac{2\sqrt{19}}{\sqrt2}] \\ \hline
20&2& [\sqrt5, \, \pm \sqrt5 \mi, \, 3\sqrt5] \\ \hline
23&3& [\frac{\sqrt{23}}{\sqrt5}, \, \pm\frac{\sqrt8\mi}{\sqrt5},\, \frac{\sqrt{23}}{\sqrt5}]\\ \hline
24&2& [\sqrt{12}, \, \pm\frac{\sqrt6 + 4\mi}{\sqrt2},\, \sqrt{12}]\\ \hline
31&3& [\frac{\sqrt{31}}{\sqrt3}, \, \pm\frac{\sqrt8 \mi}{\sqrt5}, \, \frac{4\sqrt{31}}{\sqrt3}]\\ \hline
35&2& [\sqrt7, \, \pm\frac{10 \mi}{\sqrt5},\, 3\sqrt7]\\ \hline
39&4& [\sqrt{13},\, \pm\frac{6\mi}{\sqrt3},\, \sqrt{13}]\\ \hline
40&2&
  \begin{array}{c}
    \left[\frac{3\sqrt{20}}{\sqrt{13}},\, -\frac{3\sqrt{10}\pm 22\mi}{\sqrt{26}},\, \frac{10\sqrt{10}}{\sqrt{13}}\right] \\
    \left[\frac{3\sqrt{20}}{\sqrt{13}},\, \pm\frac{\sqrt{10}\pm 18\mi}{\sqrt{26}},\, \frac{3\sqrt{20}}{\sqrt{13}}\right]
  \end{array}\\ \hline
43&1&
  [\frac{\sqrt{43}}{\sqrt2},\, \pm \frac{16\mi}{\sqrt2},\, \frac{6\sqrt{43}}{\sqrt2}]\\ \hline
47&5&
  \begin{array}{c}
    \left[\frac{\sqrt{47}}{\sqrt5},\, \pm\frac{18\mi}{\sqrt5},\, \frac{7\sqrt{47}}{\sqrt5}\right]\\
    \left[\frac{2\sqrt{47}}{\sqrt5},\, \pm\frac{29\mi}{\sqrt5},\, \frac{9\sqrt{47}}{\sqrt5}\right]_{\left(\begin{smallmatrix}\omega\\2\end{smallmatrix}\right)}
  \end{array}\\ \hline
\end{array}\hskip 2mm%\qquad
\begin{array}[t]{|ccc|}\hline
-d_K&h_K & \text{abs.\ proj.\ extreme forms} \\ \hline
\rule{0pt}{12pt}
51&2&
  \begin{array}{c}
    \left[\frac{\sqrt{51}}{\sqrt2},\, \pm\frac{7\mi}{\sqrt2},\, \frac{\sqrt{51}}{\sqrt2}\right]\\
    \left[\frac{\sqrt{51}}{\sqrt2},\, \pm\frac{10\mi}{\sqrt2},\, \frac{2\sqrt{51}}{\sqrt2}\right]
  \end{array}\\ \hline
52&2&
  \begin{array}{c}
    \left[\frac{\sqrt{52}}{\sqrt2},\, \pm\frac{7\mi}{\sqrt3},\, \frac{\sqrt{52}}{\sqrt2}\right]\\
    \left[\frac{\sqrt{52}}{\sqrt2},\, -\frac{\sqrt{52}\pm 6\mi}{\sqrt3},\, \frac{\sqrt{52}}{\sqrt2}\right]
  \end{array}\\ \hline
55&4&
  \begin{array}{c}
    \left[\frac{4\sqrt{11}}{\sqrt{19}},\, \pm\frac{35\mi}{\sqrt{95}},\, \frac{6\sqrt{11}}{\sqrt{19}}\right]\\
    \left[\frac{5\sqrt{11}}{\sqrt{19}},\, \pm\frac{7\sqrt{11}+5\sqrt{19}\mi}{2\sqrt{19}},\, \frac{11\sqrt{11}}{\sqrt{19}}\right]\\
    \left[\frac{5\sqrt{11}}{\sqrt{19}},\, \pm\left(\frac{3\sqrt{11}}{2\sqrt{19}} + \frac{\sqrt{19}\mi}{\sqrt{5}}\right),\, \frac{9\sqrt{11}}{\sqrt{19}}\right]
  \end{array}\\ \hline
56&4&
  \begin{array}{c}
    \left[\frac{\sqrt{28}}{\sqrt3},\, -\frac{\sqrt7 + 6\sqrt2\mi}{\sqrt3},\, \frac{79}{\sqrt{84}}\right]\\
    \left[\frac{\sqrt{28}}{\sqrt3},\, -\frac{11\sqrt7 + 53\frac2\mi}{\sqrt3},\, \frac{2155\sqrt3}{11\sqrt{28}}\right]_{\left(\begin{smallmatrix}1 + \omega\\3\end{smallmatrix}\right)}
  \end{array}\\ \hline
59&3&
  \begin{array}{c}
    \left[\frac{\sqrt{59}}{\sqrt2},\, \pm\frac{23\mi}{\sqrt2},\, \frac{9\sqrt{59}}{\sqrt2}\right]\\
    \left[\sqrt{108},\, -\frac{59+23\sqrt{59}\mi}{\sqrt{108}},\, \frac{5\sqrt{59}}{\sqrt2}\right]_{\left(\begin{smallmatrix}2 + \omega\\3\end{smallmatrix}\right)}
  \end{array}\\ \hline
67&1& [\frac{\sqrt{67}}{\sqrt2},\, \pm\frac{20\mi}{\sqrt2\mi},\, \frac{6\sqrt{67}}{\sqrt2}]\\ \hline
68&4&
  \begin{array}{c}
    \left[\sqrt{34},\, -\frac{\sqrt{17}+7\mi}{\sqrt2},\, \sqrt{34}\right]\\
    \left[9\sqrt{34},\, -\frac{9\sqrt{17}+95\mi}{\sqrt2},\, 17\sqrt{34}\right]_{\left(\begin{smallmatrix}1 + \omega\\3\end{smallmatrix}\right)}
  \end{array}\\ \hline
163&1& [\frac{\sqrt{163}}{\sqrt2},\, \pm\frac{18\mi}{\sqrt2},\, \frac{2\sqrt{163}}{\sqrt2}]
\\ \hline
\end{array}
$
%\\[2mm]
%\rule{\textwidth}{1pt}
%All these absolutely projective extreme forms attain their minimum at $\left(\begin{smallmatrix}1\\0\end{smallmatrix}\right)\in\mathcal O_K^2$, except %when it has a column vector as subscript.  In that case, the minimum is attained at that column vector.
\end{table}

\begin{remark}
Our computation shows that when $\vert d_K \vert < 70$, $\gamma_{K}$ attains the upper bound $\sqrt{{\vert d_K\vert}/{2}}$ if and only if $\gamma_{K}^p$ is also equal to $\sqrt{{\vert d_K\vert}/{2}}$ (there are twelve of these $K$).
Although our sample size is small, the results we have here do lead to the following interesting question:
\begin{center}
  \emph{If $\gamma_{K} = \sqrt{\dfrac{\vert d_K \vert}2}$, is $\gamma_K^p$ also equal to $\sqrt{\dfrac{\vert d_K\vert}2}$?}
\end{center}
\end{remark}

The proof of Theorem~\ref{thm1:ctes} relies heavily on the geometric tools developed by Mendoza~\cite{Mendoza} and Vogtmann~\cite{Vogtmann} for their study of the homology of the Bianchi groups.  Our idea is to make use of the classical bijection between the binary hermitian forms of a fixed determinant and the upper-half space $\mathrm H^3$, which is equivariant under the action of the Bianchi group $\Gamma = \mathrm{SL}_2(\mathcal O_K)$ (see Theorem~\ref{Thm bijection H3 and hermitian forms}).  Mendoza~\cite[Definition 2.1.6]{Mendoza} constructs a set $\MM$, which is a $2$-dimensional cellular retract of $\mathrm H^3$ and is invariant under $\Gamma$.  His result \cite[Page~30]{Mendoza} implies that the function $S \mapsto \mu_K^p(S)/\det(S)^{1/2}$
attains its local maxima at the vertices of $\MM$  \cite[Page~30]{Mendoza}.  Our main task is to apply Vogtmann's theorem to build an appropriate cellular fundamental domain of $\MM$ under the action of $\Gamma$ and to recover its vertices.

The paper is organized as follows.  In Section \ref{sec2:geometric-tools} we introduce the well-known geometric relationship between the set of binary hermitian forms of a fixed determinant and the upper-half space, and we recall some results in \cite{Mendoza} by Mendoza.  Section \ref{sec3:Vogtmann} describes Vogtmann's theorem \cite{Vogtmann}; it determines a convenient cellular fundamental domain which will be used for the computation of $\gamma_K^p$ and the projective extreme forms. In Section \ref{sec4:Computations} we discuss the idea behind the computations needed in the proof of Theorem~\ref{thm1:ctes}, and illustrate it
by working out in detail the particular case $K=\Q(\sqrt{-39})$ which has class number four and whose calculation is already quite involved.  In Section~\ref{sec5:comparison}, we define the $n$-dimensional \emph{projective Hermite-Humbert} constant for an arbitrary number field $K$ and any $n\geq2$, which can be viewed as a generalization of $\gamma_K^p$.  A comparison of this constant with other generalization of the Hermite constant given by Icaza~\cite{I}, Thunder~\cite{Thunder}, and Watanabe~\cite{W-Lie, W-Fund} will be given.

\section{Geometric tools} \label{sec2:geometric-tools}

The group $\mathrm{GL}_2(\C)$ acts on $\mathcal P$ by
\begin{equation*}
S\mapsto g\cdot S= |\det g|\; (g^{-1})^{*}\;S\; (g^{-1}),
\end{equation*}
where $*$ denotes the conjugate transpose of a matrix.
The set $\mathcal P(\Delta)$ of binary hermitian forms with a fixed determinant $\Delta$ is invariant under this action, for every $\Delta>0$.
Let $\mathrm H^3$ denote the $3$-dimensional (real) hyperbolic space which is realized as the upper half-space of $\C\times\R$, that is,
$$
\mathrm H^3=\{(z,\zeta)\in\C\times\R\;:\;\zeta>0\}.
$$
There is a natural action of $\mathrm{GL}_2(\C)$ on $\mathrm H^3$ given by
\begin{equation*}%\label{eq action SL(2,C) on H3}
  g\cdot(z,\zeta)=\left(
    \frac{(az+b)\overline{(cz+d)}+a\bar c\zeta^2}{|cz+d|^2+|c|^2\zeta^2},
    \frac{|\det g|\;\zeta }{|cz+d|^2+|c|^2\zeta^2}\right).
\end{equation*}
The subgroup $\mathrm{SL}_2(\C)$ of $\mathrm{GL}_2(\C)$ acts on $\mathrm H^3$ as isometries.  In fact, $\mathrm{PSL}_2(\C)$ is the full group of orientation preserving isometries of $\mathrm H^3$.
The next theorem relates these two actions.

\begin{theorem}\cite[Proposition 1.1]{EGMzeta}\label{Thm bijection H3 and hermitian forms}
The map $\Phi:\mathcal P\to\mathrm H^3$ defined by
$$
\Phi\left(\begin{matrix}a&b\\\bar b&c\end{matrix}\right)=\left(\frac{-b}{a},\frac{\sqrt{ac - \vert b\vert^2}}{a}\right)
$$
is $\mathrm{GL}_2(\C)$-equivariant, and $\Phi_\Delta:=\Phi|_{\mathcal P(\Delta)}:\mathcal P(\Delta)\to\mathrm H^3$ is a bijection for all $\Delta>0$. The inverse of this map is given by
$$
\Psi_\Delta(z,\zeta)=\frac{\sqrt{\Delta}}{\zeta}
  \begin{pmatrix} 1& -z\\ -\bar z&|z|^2+\zeta^2 \end{pmatrix}.
$$
\end{theorem}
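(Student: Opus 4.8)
The plan is to verify three things in turn: that $\Phi$ actually lands in $\mathrm H^3$, that $\Psi_\Delta$ is a two-sided inverse of $\Phi_\Delta$, and that $\Phi$ intertwines the two actions. I would begin with well-definedness: if $S=\left(\begin{smallmatrix} a&b\\\bar b&c\end{smallmatrix}\right)\in\mathcal P$, then positive definiteness forces $a>0$ and $\det S=ac-|b|^2>0$, so $-b/a\in\C$ and $\sqrt{ac-|b|^2}/a>0$, whence $\Phi(S)\in\mathrm H^3$. I would also record the one-line computation $\det(g\cdot S)=|\det g|^{2}\,|\det g|^{-2}\det S=\det S$, using $\det\bigl((g^{-1})^{*}\bigr)\det(g^{-1})=|\det g|^{-2}$; this shows the $\mathrm{GL}_2(\C)$-action preserves determinants and so restricts to each fibre $\mathcal P(\Delta)$, which is what makes the assertion about $\Phi_\Delta$ meaningful.

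Next I would prove the bijection by checking that $\Phi_\Delta$ and $\Psi_\Delta$ are mutually inverse, purely by substitution. Reading off the entries $a=\sqrt\Delta/\zeta$, $b=-\sqrt\Delta\,z/\zeta$, $c=\sqrt\Delta(|z|^2+\zeta^2)/\zeta$ of $\Psi_\Delta(z,\zeta)$, a single computation gives $ac-|b|^2=\Delta$ (so the image indeed lies in $\mathcal P(\Delta)$, being positive definite with $a>0$), and then $-b/a=z$ and $\sqrt{ac-|b|^2}/a=\zeta$, i.e.\ $\Phi_\Delta\circ\Psi_\Delta=\mathrm{id}$. Conversely, for $S\in\mathcal P(\Delta)$ one uses $\det S=\Delta$ to write $\Phi_\Delta(S)=(-b/a,\sqrt\Delta/a)$, and substituting back into $\Psi_\Delta$ recovers $S$, the only simplification needed being $|b|^2+\Delta=ac$; thus $\Psi_\Delta\circ\Phi_\Delta=\mathrm{id}$.

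The substantive point is equivariance, $\Phi(g\cdot S)=g\cdot\Phi(S)$ for all $g\in\mathrm{GL}_2(\C)$. Because this identity is stable under composition in $g$---if it holds for $g$ and for $h$ (for every $S$), then by associativity of the left action it holds for $gh$---it suffices to verify it on a generating set, and I would take the scalars $\lambda I$, the upper unipotents $\left(\begin{smallmatrix}1&t\\0&1\end{smallmatrix}\right)$, the diagonals $\left(\begin{smallmatrix}\lambda&0\\0&1\end{smallmatrix}\right)$ with $\lambda\in\C^{\times}$, and the Weyl element $w=\left(\begin{smallmatrix}0&-1\\1&0\end{smallmatrix}\right)$, which together generate $\mathrm{GL}_2(\C)$. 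Scalars act trivially on both sides; the unipotent sends $(a,b)\mapsto(a,b-at)$, matching the translation $(z,\zeta)\mapsto(z+t,\zeta)$; and the diagonal yields $(z,\zeta)\mapsto(\lambda z,|\lambda|\zeta)$ on each side. These three cases are immediate short computations.

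The main obstacle is the Weyl element, the only case involving a genuine inversion. Here $w\cdot S=\left(\begin{smallmatrix}c&-\bar b\\-b&a\end{smallmatrix}\right)$, so $\Phi(w\cdot S)=(\bar b/c,\sqrt\Delta/c)$; to match this against the M\"obius-type formula on $\mathrm H^3$ I would re-express the entries through $(z,\zeta)=\Phi(S)$ using $b=-az$ and, crucially, $c=a(|z|^2+\zeta^2)$, the latter obtained from the determinant relation $ac-|b|^2=\Delta=a^2\zeta^2$. This turns $\Phi(w\cdot S)$ into $\bigl(-\bar z/(|z|^2+\zeta^2),\,\zeta/(|z|^2+\zeta^2)\bigr)$, which is exactly $w\cdot(z,\zeta)$ from the displayed action. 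The care required in this last step---tracking the conjugations and the $|\det g|$ factor correctly, and invoking the determinant relation to pass between the two coordinate systems---is where essentially all the content of the theorem lies; everything else is bookkeeping.
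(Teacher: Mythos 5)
Your proof is correct. Note, however, that the paper itself offers no proof of this statement: it is quoted verbatim from \cite[Proposition 1.1]{EGMzeta}, so there is no internal argument to compare yours against. The reference proves equivariance essentially by direct computation for an arbitrary $g\in\mathrm{GL}_2(\C)$, whereas you reduce to the Bruhat-type generating set $\{\lambda I,\ \text{diag}(\lambda,1),\ \left(\begin{smallmatrix}1&t\\0&1\end{smallmatrix}\right),\ w\}$ and check each generator separately; this is legitimate because the paper declares both formulas to be genuine left actions (so the identity $\Phi(g\cdot S)=g\cdot\Phi(S)$ propagates under products), and it isolates the only nontrivial computation in the Weyl element, where your use of the determinant relation $c=a(|z|^2+\zeta^2)$ is exactly the right mechanism. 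Your verification that the action preserves $\det$, and the two substitution checks showing $\Phi_\Delta$ and $\Psi_\Delta$ are mutually inverse, are also complete. The one point worth making explicit if this were written out in full: the generator argument presupposes that the displayed $\mathrm H^3$ formula is an action (associativity), which you are entitled to take from the paper's setup but should cite as such rather than leave tacit.
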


The action of $\mathrm{GL}_2(\C)$ on $\mathrm H^3$ extends to the boundary of $\mathrm H^3$. We identify this boundary with $\mathbb P^1(\C)=\C\cup\{\infty\}$, where the elements in $\C$ correspond to the elements in $\C\times\R$ with second component equal to zero.  Via this identification, the action of $\mathrm{GL}_2(\C)$ on this boundary is precisely the classical M\"obius action.

An element $\lambda\in\mathbb P^1(\C)$ is called a \emph{cusp} if either  $\lambda\in K\subset\C$ or $\lambda = \infty$. The set of cusps can be identified with $\mathbb P^1(K)$ and, from now on, we will write  $\lambda=\alpha/\beta$ with $\alpha,\beta\in\mathcal O_K$ if $\lambda\in K$, or $\lambda=1/0$ if $\lambda=\infty$.  Note that the subgroup $\mathrm{GL}_2(K)$ preserves the set of all cusps.  Following the classical theory we associate to each cusp $\lambda=\alpha/\beta$ the element of the ideal class group $\cl_K$ containing the fractional ideal $\langle\alpha,\beta\rangle$.   It is easy to see that this map is well-defined and it induces a bijection between $\mathrm{SL}_2(\mathcal O_K)\backslash \mathbb P^1(K)$ and $\cl_K$ (see \cite[\S 7.2]{EGMbook}).

We are now ready to describe Mendoza's work \cite{Mendoza}.
The \emph{distance between a point $P=(z,\zeta)\in\mathrm{H}^3$ and a cusp $\lambda$} is given by
\begin{equation*}
\dist(P,\lambda)=\frac{|\beta z-\alpha|^2+|\beta|^2\zeta^2}{\zeta \, \norm{\alpha,\beta}}.
\end{equation*}
This definition does not depend on the choices of $\alpha$ and $\beta$, as we shall see in the following remarks.

\begin{remark}[geometric interpretation]\label{rmk2:geom-interpretation}
It follows immediately that for any $\lambda\in K$, $N_\lambda := \norm{\alpha,\beta}/|\beta|^2$ does not depend on the choices of the integers $\alpha$ and $\beta$ as long as $\lambda =\alpha/\beta\in K$.
This gives $\dist(P,\lambda)={|P-\lambda|^2}/{(\zeta\, N_\lambda)}$.
\end{remark}

If $\lambda$ and $\mu$ are two cusps, let $S(\lambda,\mu)$ be the set of points in $\mathrm H^3$ that are equidistant from these cusps. These sets are hemispheres or planes perpendicular to the boundary $\C$.  In other words, they are geodesic planes in $\mathrm H^3$. In particular, $S(\infty,\lambda)$ is a hemisphere centered at $\lambda$ with radius $\sqrt{N_\lambda}$.

\begin{remark}[arithmetic interpretation]\label{rmk2:arith-interpretation}
Write $S=\Psi_\Delta(P)\in \mathcal P(\Delta)$ and $v=\left(\begin{smallmatrix}\alpha\\ \beta\end{smallmatrix}\right) \in\mathcal O_K^2$ where $\lambda=\alpha/\beta$. Then
\begin{equation}\label{eq2:arith-interpretation}
\dist(P,\lambda) = \frac{S(v)}{\sqrt{\Delta}\;\norm{\alpha,\beta }}.
\end{equation}
\end{remark}

As an easy application of (\ref{eq2:arith-interpretation}) we have the following proposition, which can also be obtained from combining Propositions 3.2 and 3.3 of \cite{Vogtmann}.

\begin{proposition}\label{prop transformation rule}
If $g=\left(\begin{smallmatrix}a&b\\c&d\end{smallmatrix}\right)\in \mathrm{M}_2(\mathcal O_K)$ with $\det(g)\neq0$, $P\in\mathrm H^3$ and $\lambda=\alpha/\beta\in\mathbb{P}^1(K)$, then
  $$
  \dist(g\cdot P,g\cdot\lambda)=\frac{|\det g|\;\norm{\alpha,\beta}}{\norm{a\alpha+b\beta,c\alpha+d\beta}}\; \dist (P,\lambda).
  $$
In particular $\dist(g\cdot P,g\cdot\lambda)=\dist (P,\lambda)$ for all $g\in\mathrm{SL}_2(\mathcal O_K)$.
\end{proposition}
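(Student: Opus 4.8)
The plan is to verify the transformation rule directly from the arithmetic interpretation in Remark~\ref{rmk2:arith-interpretation}, reducing the geometric identity to a purely algebraic identity about how the value $S(v)$ and the ideal norm $\norm{\alpha,\beta}$ transform under the action of $g$. The key observation is that equation \eqref{eq2:arith-interpretation} expresses $\dist(P,\lambda)$ entirely in terms of the form $S=\Psi_\Delta(P)$, the vector $v=\left(\begin{smallmatrix}\alpha\\\beta\end{smallmatrix}\right)$, and $\norm{\alpha,\beta}$, so I only need to track how each of these three quantities changes when $P$ is replaced by $g\cdot P$ and $\lambda$ by $g\cdot\lambda$.

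First I would set $\lambda'=g\cdot\lambda$. Under the M\"obius action, if $\lambda=\alpha/\beta$ then $\lambda'=(a\alpha+b\beta)/(c\alpha+d\beta)$, so the natural representing vector for $\lambda'$ is $v'=g\,v=\left(\begin{smallmatrix}a\alpha+b\beta\\c\alpha+d\beta\end{smallmatrix}\right)$. Next, let $\Delta'=\det(g\cdot S)$ and $S'=\Psi_{\Delta'}(g\cdot P)$; by the $\mathrm{GL}_2(\C)$-equivariance of $\Phi$ in Theorem~\ref{Thm bijection H3 and hermitian forms}, we have $S'=g\cdot S$, and from the explicit action $S\mapsto|\det g|\,(g^{-1})^*S\,(g^{-1})$ one computes $\det(g\cdot S)=|\det g|^2\det(S)=|\det g|^2\Delta$, hence $\sqrt{\Delta'}=|\det g|\sqrt{\Delta}$. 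The crucial algebraic step is then the value identity
\begin{equation*}
(g\cdot S)(g\,v) = |\det g|\; S(v),
\end{equation*}
which follows from $(g\cdot S)=|\det g|\,(g^{-1})^*S\,(g^{-1})$ upon substituting $gv$ and cancelling the $g^{-1}g$ factors. Applying \eqref{eq2:arith-interpretation} to the pair $(g\cdot P,\lambda')$ gives
\begin{equation*}
\dist(g\cdot P,\lambda')
=\frac{(g\cdot S)(gv)}{\sqrt{\Delta'}\;\norm{a\alpha+b\beta,c\alpha+d\beta}}
=\frac{|\det g|\,S(v)}{|\det g|\sqrt{\Delta}\;\norm{a\alpha+b\beta,c\alpha+d\beta}},
\end{equation*}
and multiplying and dividing by $\norm{\alpha,\beta}$ and again invoking \eqref{eq2:arith-interpretation} for $(P,\lambda)$ yields exactly the claimed formula; the specialization to $\det g=1$ with $g\in\mathrm{SL}_2(\mathcal O_K)$ is immediate since then $|\det g|=1$ and $g$ maps $\mathcal O_K^2$ to itself so the two ideal norms coincide.

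The main obstacle I anticipate is \emph{not} the equivariance or the value identity, which are routine matrix manipulations, but rather the bookkeeping around well-definedness: the vector $v'=gv$ is one representative of the cusp $g\cdot\lambda$, and I must confirm that the ideal norm $\norm{a\alpha+b\beta,c\alpha+d\beta}$ appearing in the statement is genuinely $N_K$ of the ideal $\langle a\alpha+b\beta,\;c\alpha+d\beta\rangle$ generated by these coordinates. This is precisely the content guaranteed by Remark~\ref{rmk2:geom-interpretation}, which establishes that $\dist(P,\lambda)$ is independent of the chosen integers $\alpha,\beta$; I would cite this to sidestep any concern that scaling $v$ by a unit or a common factor alters the ratio. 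Once independence of representatives is in hand, the computation above is forced, and the factor $|\det g|\,\norm{\alpha,\beta}/\norm{a\alpha+b\beta,c\alpha+d\beta}$ emerges naturally as the discrepancy between the two normalizations.
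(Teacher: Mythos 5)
Your overall strategy---reducing the transformation rule to the arithmetic interpretation \eqref{eq2:arith-interpretation}, the equivariance of $\Phi$, and the value identity $(g\cdot S)(gv)=|\det g|\,S(v)$---is exactly the route the paper intends (it introduces the proposition as an easy application of \eqref{eq2:arith-interpretation}), and your value identity is correct. However, there is a genuine error in your determinant computation: you claim $\det(g\cdot S)=|\det g|^2\det(S)$, hence $\sqrt{\Delta'}=|\det g|\sqrt{\Delta}$. This is false. Since these are $2\times 2$ matrices, $\det\bigl(|\det g|\,(g^{-1})^{*}S\,(g^{-1})\bigr)=|\det g|^{2}\,\overline{\det(g^{-1})}\,\det(S)\,\det(g^{-1})=|\det g|^{2}\,|\det g|^{-2}\det(S)=\det(S)$; the scalar prefactor $|\det g|$ in the action exactly compensates the determinants of $(g^{-1})^{*}$ and $g^{-1}$. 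Indeed, the paper states explicitly in Section~\ref{sec2:geometric-tools} that each set $\mathcal P(\Delta)$ is invariant under the action, which is precisely the statement $\Delta'=\Delta$.

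This slip is not cosmetic, because it propagates into your displayed computation: the spurious factor $|\det g|$ you place in $\sqrt{\Delta'}$ cancels the genuine factor $|\det g|$ coming from $(g\cdot S)(gv)=|\det g|\,S(v)$, so what your display actually proves is $\dist(g\cdot P,g\cdot\lambda)=\frac{\norm{\alpha,\beta}}{\norm{a\alpha+b\beta,c\alpha+d\beta}}\,\dist(P,\lambda)$, which is off by $|\det g|$ from the proposition; your closing assertion that this ``yields exactly the claimed formula'' therefore does not follow from your premises. (A quick sanity check: for $g=\left(\begin{smallmatrix}2&0\\0&2\end{smallmatrix}\right)$ one has $g\cdot P=P$ and $g\cdot\lambda=\lambda$, and the proposition's factor is $\frac{4\,\norm{\alpha,\beta}}{\norm{2\alpha,2\beta}}=1$, consistent; your formula would give the factor $\tfrac14$, a contradiction.) The repair is immediate: with the correct $\Delta'=\Delta$, applying \eqref{eq2:arith-interpretation} to $(g\cdot P,\,g\cdot\lambda)$ with the integral representative $gv\in\mathcal O_K^2$ gives $\dist(g\cdot P,g\cdot\lambda)=\frac{|\det g|\,S(v)}{\sqrt{\Delta}\;\norm{a\alpha+b\beta,c\alpha+d\beta}}$, and dividing by $\dist(P,\lambda)=\frac{S(v)}{\sqrt{\Delta}\;\norm{\alpha,\beta}}$ yields exactly the stated factor $\frac{|\det g|\;\norm{\alpha,\beta}}{\norm{a\alpha+b\beta,c\alpha+d\beta}}$. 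Your remarks on well-definedness of representatives are fine, but that was never where the difficulty lay.
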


In view of \eqref{eq1:gamma_K^p} and \eqref{eq2:arith-interpretation}, the projective Hermite constant $\gamma_K^p$ can be computed by
\begin{equation}\label{eq2:gamma^p-H3}
\gamma_K^p=\sup_{P\in\mathrm H^3} \inf_{\lambda\in \mathbb P^1(K)} \dist(P,\lambda).
\end{equation}

A positive number $c\in\R$ is called an \emph{upper reduction constant} for $K$ if for each $P \in \mathrm H^3$ there is at least one cusp $\lambda$ of $K$ such that $d(P,\lambda)\leq c$. The \emph{optimal upper reduction constant} for $K$ is the infimum of all upper reduction constants for $K$.

\begin{theorem}\cite{Mendoza} \label{Mendoza}
For $P$ a point in $\mathrm H^3$, the following hold.
\begin{enumerate}
\item For any real number $c>0$, there are only finitely many cusps $\lambda$ of $K$ such that $\dist(P,\lambda)\leq c$.
\item $\sqrt{|d_K|/2}$ is an upper reduction constant for $K$.
\item Consider the function $\eta_K:\mathrm H^3\to\R$ defined by
\begin{equation}\label{eq eta}
\eta_K(P)=\inf_{\lambda\in\mathbb P^1(K)} \dist(P,\lambda).
\end{equation}
Then $\eta_K$ is continuous and invariant under the action of $\mathrm{SL}_2(\mathcal O_K)$.
\item The function $\eta_K$ attains a  maximum on $\mathrm H^3$, which is equal to the optimal upper reduction constant for $K$.
  \end{enumerate}
\end{theorem}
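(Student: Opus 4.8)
The plan is to establish the four items in turn, translating everything through the arithmetic identity of Remark~\ref{rmk2:arith-interpretation} and the transformation rule of Proposition~\ref{prop transformation rule}. Throughout, given $P=(z,\zeta)$ I write $S=\Psi_1(P)\in\mathcal P(1)$, so that $\dist(P,\lambda)=S(v)/\norm{\alpha,\beta}$ for every integral representative $v=\left(\begin{smallmatrix}\alpha\\\beta\end{smallmatrix}\right)$ of $\lambda=\alpha/\beta$, and in particular $\eta_K(P)=\mu_K^p(S)$.

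For (1), I would fix once and for all integral ideals $\mathcal I_1,\dots,\mathcal I_{h_K}$ representing the ideal classes of $K$. Every cusp $\lambda$ has an integral representative $v$ whose ideal $\langle\alpha,\beta\rangle$ equals the representative $\mathcal I_j$ of its class, obtained by scaling $v$ by a suitable element of $K^\times$; for such a $v$ the denominator $\norm{\alpha,\beta}=N_K(\mathcal I_j)$ is a fixed constant. Thus the condition $\dist(P,\lambda)\le c$ reads $S(v)\le c\,N_K(\mathcal I_j)$, and since $S$ is positive definite and $\mathcal O_K^2$ is a lattice in $\C^2$, only finitely many $v\in\mathcal O_K^2$ can satisfy it for each $j$. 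As distinct cusps give distinct representatives and there are finitely many classes, (1) follows. The same bookkeeping gives (2) almost for free: since $\norm{\alpha,\beta}\ge1$ for every integral $v$, we get $\mu_K^p(S)\le\min_{v\ne0}S(v)$, and this classical arithmetic minimum is at most $\gamma_K\det(S)^{1/2}=\gamma_K\le\sqrt{|d_K|/2}$ by Oppenheim's inequality~\eqref{eq1:Oppenheim-inequality}; hence $\eta_K(P)\le\sqrt{|d_K|/2}$ for all $P$.

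The invariance in (3) is immediate: for $g\in\mathrm{SL}_2(\mathcal O_K)$, Proposition~\ref{prop transformation rule} gives $\dist(g\cdot P,g\cdot\lambda)=\dist(P,\lambda)$, and because $g$ permutes $\mathbb P^1(K)$ bijectively, taking the infimum over $\lambda$ yields $\eta_K(g\cdot P)=\eta_K(P)$. Continuity is the main obstacle, since an infimum of continuous functions is only upper semicontinuous a priori; the remedy is to upgrade (1) to a \emph{uniform} local finiteness. I would fix $P_0$ and a relatively compact neighborhood $U$ on which $\zeta$ stays bounded away from $0$ and $\infty$. The quadratic forms $Q_P(\alpha,\beta)=|\beta z-\alpha|^2+|\beta|^2\zeta^2$ (the numerator of $\dist(P,\lambda)$) depend continuously on $P$ and are positive definite, so compactness of $\bar U$ yields a single $\varepsilon>0$ with $Q_P\ge\varepsilon(|\alpha|^2+|\beta|^2)$ on $\bar U$. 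Running the counting of (1) with the threshold $c'=\sqrt{|d_K|/2}+1$ then confines the contributing $(\alpha,\beta)$ to a bounded region uniformly over $P\in U$, producing one finite set $F$ of cusps with $\dist(P,\lambda)>c'$ for all $P\in U$ and $\lambda\notin F$. As $\eta_K(P)\le\sqrt{|d_K|/2}<c'$ by (2), the cusps outside $F$ are irrelevant and $\eta_K(P)=\min_{\lambda\in F}\dist(P,\lambda)$ on $U$, a minimum of finitely many continuous functions, hence continuous.

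For (4), $\eta_K$ descends by (3) to the finite-volume but non-compact quotient $\mathrm{SL}_2(\mathcal O_K)\backslash\mathrm{H}^3$, and the only difficulty is to keep the maximizer away from the cusps. This is settled by the decay of $\eta_K$ at the cusps: ascending the cusp at $\infty$ gives $\eta_K(P)\le\dist(P,\infty)=1/\zeta\to0$, and the same holds at the finitely many other cusps by invariance. Choosing any $P_*$ with $m_*:=\eta_K(P_*)>0$ (positivity holds because, by (1), $\eta_K(P_*)$ is the minimum of finitely many positive numbers) and shrinking the cusp neighborhoods so that $\eta_K<m_*$ on them, the supremum of $\eta_K$ is attained on the complementary compact core by continuity; call this maximum $M$. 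Finally, since each value $\eta_K(P)$ is attained as a minimum, $c$ is an upper reduction constant if and only if $\eta_K(P)\le c$ for all $P$, i.e.\ if and only if $c\ge M$; thus the set of upper reduction constants is $[M,\infty)$ and the optimal one equals $M=\max_{\mathrm{H}^3}\eta_K$.
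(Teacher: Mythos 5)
Your proof is correct, but it cannot be compared step-by-step with the paper's own argument for a simple reason: the paper does not prove this theorem at all. It is quoted from Mendoza's dissertation, and the only justification the authors offer is the remark that part (2) can be seen from Oppenheim's inequality \eqref{eq1:Oppenheim-inequality} --- which is exactly your argument for (2). What you have done is reconstruct the missing proof, and the reconstruction is sound: the ideal-class-representative trick reduces (1) to counting lattice points of a positive definite form with a uniformly bounded denominator $\norm{\alpha,\beta}=N_K(\mathcal I_j)$; the uniform local-finiteness argument in (3) correctly upgrades the a priori upper semicontinuity of an infimum of continuous functions to genuine continuity; and in (4) you correctly use (1) to know that the infimum defining $\eta_K$ is attained, so that \emph{upper reduction constant} is equivalent to \emph{upper bound for $\eta_K$} and the optimal one equals $\max\eta_K$. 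The single place where you lean on substantial outside theory is the ``compact core'' of the finite-volume quotient $\mathrm{SL}_2(\mathcal O_K)\backslash\mathrm H^3$ in (4), i.e.\ that the quotient has finitely many ends, each a horoball quotient, with compact complement. For Bianchi groups this is classical and citable (it is in the Elstrodt--Grunewald--Mennicke book referenced by the paper), so it is not a gap; still, the paper's own machinery yields a more self-contained route that you might prefer: $\mathrm H^3=\Gamma\cdot\big(H(\lambda_1)\cup\cdots\cup H(\lambda_{h_K})\big)$, and on $H(\infty)$ one has $\eta_K(P)=\dist(P,\infty)=1/\zeta$ (Remark \ref{rmk H(infty)}), so your part (2) pins $\sqrt{2/|d_K|}\le\zeta\le 1/m_*$ on the set $\{P\in H(\infty):\eta_K(P)\ge m_*\}$, which is therefore compact modulo the translation group $\Gamma(\infty)$; repeating this for the other class representatives via the matrices $L_{\lambda_i}$ gives attainment of the maximum with no appeal to general structure theory, and is presumably closer to Mendoza's original argument.
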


Note that Part~(2) of the above theorem can be seen from (\ref{eq1:Oppenheim-inequality}).

\begin{definition}\label{def H(lambda)}
Let $\lambda$ be a cusp of $K$. The \emph{minimal set $H(\lambda)$ of $\lambda$} is the set of points in $\mathrm H^3$ such that the distance from $\lambda$ is smaller than or equal to the distance from any other cusp, that is,
$$
H(\lambda)=\{P\in\mathrm H^3\;:\; \dist(P,\lambda)\leq\dist(P,\mu)\quad\text{for all $\mu\in \mathbb P^1(K)$}\}.
$$
\end{definition}

\begin{remark}\label{rmk H(infty)}
It follows from the definition that $\eta_K(P) = \dist(P, \lambda)$ for all $P$ in $H(\lambda)$.
The set $H(\infty)$ is easily seen to be the set of points in $\mathrm H^3$ which lie above all hemispheres $S(\infty,\lambda)$ for all finite cusps $\lambda$.
\end{remark}

We denote by $\Gamma$ the Bianchi group $\mathrm{SL}_2(\mathcal O_K)$, and by $\Gamma(\lambda)$ the stabilizer of $\lambda$ in $\Gamma$. Let  $\{\lambda_1,\lambda_2,\dots,\lambda_{h_K}\}$ be a set of representatives of the $\mathrm{SL}_2(\mathcal O_K)$-orbits in the set of cusps. From Proposition \ref{prop transformation rule} and Theorem \ref{Mendoza} one can easily see that $\Gamma\cdot \big(H(\lambda_1)\cup\dots\cup H(\lambda_{h_K})\big)=\mathrm H^3$.  Note that this set is far from being a fundamental domain of the action of $\Gamma$ on $\mathrm H^3$, since $\Gamma(\lambda_i)$ preserves $H(\lambda_i)$ for each $i$.

The \emph{minimal incidence set} \cite[Definition 2.1.6]{Mendoza} is
\begin{equation}\label{eq minimal incidence set}
\MM=\bigcup_\lambda \partial H(\lambda)=\bigcup_{\lambda\neq\mu} H(\lambda)\cap H(\mu),
\end{equation}
This $\MM$ has been used in the computation of integral and rational homology of Bianchi groups, since it is a $2$-dimensional cellular retract of $\mathrm H^3$ which is invariant under $\mathrm{SL}_2(\mathcal O_K)$. The readers are referred to \cite{Mendoza},  \cite{Vogtmann}, and the references therein for more information.

\begin{lemma}\cite[Proposition 2.1.7]{Mendoza} \label{mendoza2}
If the function $\eta_K$ defined in $(\ref{eq eta})$ attains its maximum at a point $P$, then $P$ is in $\MM$.
\end{lemma}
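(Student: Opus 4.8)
The plan is to prove the contrapositive: if $P\notin\MM$ then $\eta_K$ does not even attain a local maximum at $P$. First I would record the standard reformulation. By Theorem~\ref{Mendoza}(1) the infimum defining $\eta_K(P)$ is attained, so there is a cusp $\lambda_0$ with $\dist(P,\lambda_0)=\eta_K(P)$, i.e. $P\in H(\lambda_0)$. By the description \eqref{eq minimal incidence set} of $\MM$, the condition $P\notin\MM$ is exactly that $\lambda_0$ is the \emph{unique} minimizing cusp, i.e. $\dist(P,\mu)>\eta_K(P)$ for every cusp $\mu\neq\lambda_0$. The goal then becomes: moving $P$ slightly in a well-chosen direction strictly increases $\eta_K$.

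Second, I would show that a single-cusp distance function has no interior critical point, hence no local maximum. Writing $P=(z,\zeta)$ with $z=x+\mi y$ and a finite cusp $\lambda=\lambda_1+\mi\lambda_2$, Remark~\ref{rmk2:geom-interpretation} gives $\dist(P,\lambda)=(|z-\lambda|^2+\zeta^2)/(\zeta N_\lambda)$, whence
\[
\partial_x\dist(P,\lambda)=\frac{2(x-\lambda_1)}{\zeta N_\lambda},\qquad
\partial_\zeta\dist(P,\lambda)=\frac{\zeta^2-|z-\lambda|^2}{\zeta^2 N_\lambda}.
\]
The horizontal derivatives vanish only when $z=\lambda$, and then $\partial_\zeta\dist=1/N_\lambda\neq0$; for $\lambda=\infty$ one has $\dist(P,\infty)=1/\zeta$ with $\partial_\zeta=-1/\zeta^2\neq0$. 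Thus the gradient of $\dist(\cdot,\lambda_0)$ at $P$ is nonzero, and I can choose a short curve $\gamma(t)$ with $\gamma(0)=P$ along which $\dist(\gamma(t),\lambda_0)$ is strictly increasing for $t\ge0$. Geometrically this just says the level sets of $\dist(\cdot,\lambda_0)$ are horospheres based at $\lambda_0$, across which the function is strictly monotone.

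Third, and this is the step I expect to be the crux, I need a locally uniform version of the finiteness in Theorem~\ref{Mendoza}(1): a compact neighborhood $\overline U$ of $P$ on which only finitely many cusps $\mu$ ever satisfy $\dist(Q,\mu)\le\eta_K(P)+1$. The pointwise statement does not immediately yield this, so I would argue by hand. The key arithmetic input is that $N_\lambda=\norm{\alpha,\beta}/|\beta|^2\le1$ for every finite cusp $\lambda=\alpha/\beta$, since $\langle\beta\rangle\subseteq\langle\alpha,\beta\rangle$ forces $\norm{\alpha,\beta}\le|\beta|^2$. For $Q=(z,\zeta)\in\overline U$ this gives both $\dist(Q,\mu)\ge|z-\mu|^2/\zeta$ and $\dist(Q,\mu)\ge\zeta/N_\mu$; hence $\dist(Q,\mu)\le\eta_K(P)+1$ confines $\mu$ to a bounded disc $B\subset\C$ and forces $N_\mu\ge\delta'$ for some $\delta'>0$ depending only on $\overline U$. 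Finally, choosing for each such cusp a representative $\mu=\alpha/\beta$ with $\langle\alpha,\beta\rangle$ of minimal norm in its ideal class bounds $|\beta|$, and the separation estimate $|\mu-\mu'|=|\alpha\beta'-\alpha'\beta|/|\beta\beta'|\ge1/|\beta\beta'|$ (using that $\alpha\beta'-\alpha'\beta$ is a nonzero element of $\mathcal O_K$) shows the admissible cusps are uniformly separated in $B$, hence finite in number.

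With these three ingredients the conclusion is routine. Shrinking $\overline U$ and using continuity of the finitely many functions $\dist(\cdot,\mu)$ together with the strict gap $\dist(P,\mu)>\eta_K(P)$, I can ensure that along the curve $\gamma$ of Step~2 the nearest cusp remains $\lambda_0$ for small $t>0$; then $\eta_K(\gamma(t))=\dist(\gamma(t),\lambda_0)>\dist(P,\lambda_0)=\eta_K(P)$, contradicting that $P$ is a local maximum. Equivalently the argument can be kept sequential: if no such $t$ worked, continuity of $\eta_K$ (Theorem~\ref{Mendoza}(3)) together with the local finiteness would produce a fixed cusp $\mu^*\neq\lambda_0$ with $\dist(P,\mu^*)\le\eta_K(P)$, again contradicting uniqueness. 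The main obstacle is genuinely Step~3: everything else is a short computation or a formal limiting argument, whereas upgrading Mendoza's pointwise finiteness to a neighborhood is where the arithmetic of $\mathcal O_K$ (the bound $N_\lambda\le1$ and control of denominators) must be used.
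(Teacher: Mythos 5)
Your proof is correct. Note, however, that there is no internal proof in the paper to compare against: the lemma is stated purely as a citation of Proposition 2.1.7 of Mendoza's dissertation, so your argument supplies what the paper outsources. Your contrapositive route is the natural one and is complete: off $\MM$, by \eqref{eq minimal incidence set} together with the attained infimum (which follows from Theorem~\ref{Mendoza}(1)), the minimizing cusp $\lambda_0$ is unique; your explicit derivatives show $\dist(\cdot,\lambda_0)$ has no critical point in $\mathrm H^3$ (including $\lambda_0=\infty$, where the function is $1/\zeta$), so the distance to $\lambda_0$ can be strictly increased; and the perturbation then strictly increases $\eta_K$ provided $\lambda_0$ remains the minimizer nearby. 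You also correctly identified the crux, namely upgrading the pointwise finiteness of Theorem~\ref{Mendoza}(1) to a locally uniform statement on a compact neighborhood, and your arithmetic there is sound: $N_\mu\le1$ (since $\langle\beta\rangle\subseteq\langle\alpha,\beta\rangle$ for $\mu=\alpha/\beta$) traps the relevant cusps in a bounded region of $\C$ and forces $N_\mu\ge\zeta_{\min}/(\eta_K(P)+1)>0$, where $\zeta_{\min}$ is the least height on the neighborhood; choosing the representative whose ideal $\langle\alpha,\beta\rangle$ has least norm in its class (hence at most the Minkowski bound) bounds $|\beta|$; and $|\alpha\beta'-\alpha'\beta|\ge1$ for distinct cusps gives uniform separation, hence finiteness. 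As a bonus, you prove more than the statement asks: no point outside $\MM$ is even a \emph{local} maximum of $\eta_K$, which is precisely the stronger fact (attributed in the paper to page 30 of Mendoza) used later to conclude that projective extreme points lie on $\MM$, indeed at its vertices.
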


Now, \eqref{eq2:gamma^p-H3}, Proposition~\ref{prop transformation rule}, Theorem~\ref{Mendoza} and Lemma~\ref{mendoza2} allow us to write $\gamma_K^p$ as
\begin{equation}\label{eq2:1st-red-Mendoza}
\gamma_K^p=\max_{P\in \MM}\eta_K(P) = \max_{1\leq i\leq h_K}\max_{P\in \widetilde{T}_i} \;\dist(P,\lambda_i),
\end{equation}
where $\widetilde{T}_i$ denotes a fundamental domain of $\partial H(\lambda_i)$ under the action of $\Gamma(\lambda_i)$. In the next section we shall use the work of Vogtmann in \cite{Vogtmann} to construct the sets $\widetilde{T}_i$.

A point at which $\eta_K$ attains a local  maximum is called a \emph{projective extreme} point.
A point is called an \emph{absolutely projective extreme} point if $\eta_K$ attains the global maximum there.
It is explained in \cite[Page 30]{Mendoza} that the projective extreme points are the vertices of $\MM$.
By definition, projective extreme points and projective extreme forms of a fixed discriminant are in bijective correspondence.

\begin{proposition}\label{prop2:extreme-points}
The map $\Phi$ defined in Theorem $\ref{Thm bijection H3 and hermitian forms}$ identifies projective extreme forms (resp.\ absolutely projective extreme forms) of a fixed determinant with projective extreme points (resp.\ absolutely projective extreme points).
\end{proposition}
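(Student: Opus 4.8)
The plan is to reduce the proposition to the single pointwise identity
$$
\eta_K\bigl(\Phi_\Delta(S)\bigr)=\frac{\mu_K^p(S)}{\det(S)^{1/2}}\qquad(S\in\mathcal P(\Delta)),
$$
after which the statement follows formally: $\Phi_\Delta$ is a homeomorphism under which $\eta_K$ pulls back to the function $S\mapsto\mu_K^p(S)/\det(S)^{1/2}$, so it must carry the extrema of the one onto the extrema of the other.

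First I would check that, for a nonzero $v=\left(\begin{smallmatrix}\alpha\\\beta\end{smallmatrix}\right)\in\mathcal O_K^2$, the ratio $S(v)/\norm{\alpha,\beta}$ depends only on the cusp $\lambda=\alpha/\beta\in\mathbb P^1(K)$. Replacing $v$ by $tv$ with $t\in K^\times$ scales both $S(tv)=|t|^2 S(v)$ and $\norm{t\alpha,t\beta}=|t|^2\,\norm{\alpha,\beta}$ by the common factor $|t|^2=|N_{K/\Q}(t)|$, so the ratio is unchanged. Since every cusp is $\alpha/\beta$ for some nonzero $v\in\mathcal O_K^2$, the projective minimum $\mu_K^p(S)$ equals the infimum of $S(v)/\norm{\alpha,\beta}$ taken over $\lambda\in\mathbb P^1(K)$. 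Inserting the arithmetic interpretation $\dist(P,\lambda)=S(v)/\bigl(\sqrt{\Delta}\,\norm{\alpha,\beta}\bigr)$ with $S=\Psi_\Delta(P)$ from Remark~\ref{rmk2:arith-interpretation} then gives $\mu_K^p(S)=\sqrt{\Delta}\,\eta_K(P)$, which is the displayed identity. Moreover the infimum defining $\eta_K$ is attained, by the finiteness statement in Theorem~\ref{Mendoza}(1), so that $\mu_K^p(S)$ is indeed a minimum, as required by its definition.

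Next I would account for the normalization to a fixed determinant. Since $\mu_K^p(tS)=t\,\mu_K^p(S)$ and $\det(tS)^{1/2}=t\,\det(S)^{1/2}$ for all $t>0$, the quotient $F(S):=\mu_K^p(S)/\det(S)^{1/2}$ is constant along the rays $\{tS:t>0\}$ of the cone $\mathcal P$; hence a form $S\in\mathcal P(\Delta)$ is a local (respectively absolute) maximum of $F$ on $\mathcal P$ if and only if it is one for the restriction $F|_{\mathcal P(\Delta)}$. Now Theorem~\ref{Thm bijection H3 and hermitian forms} presents $\Phi_\Delta$ as a bijection of $\mathcal P(\Delta)$ onto $\mathrm H^3$ with the explicit smooth inverse $\Psi_\Delta$, hence as a homeomorphism, and the displayed identity reads $F|_{\mathcal P(\Delta)}=\eta_K\circ\Phi_\Delta$. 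A homeomorphism carries local maxima of $\eta_K$ bijectively onto local maxima of $F|_{\mathcal P(\Delta)}$, and global maxima onto global maxima, which is precisely the asserted correspondence between projective extreme (respectively absolutely projective extreme) forms of determinant $\Delta$ and projective extreme (respectively absolutely projective extreme) points.

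I expect the only point requiring genuine care---the \emph{hard part}, modest as it is---to be the passage from the minimum over vectors $v$ defining $\mu_K^p$ to the infimum over cusps $\lambda$ defining $\eta_K$: one must verify both the scaling invariance of $S(v)/\norm{\alpha,\beta}$ and the attainment of the infimum, so that the two extremal quantities genuinely coincide. Everything downstream of the identity $F|_{\mathcal P(\Delta)}=\eta_K\circ\Phi_\Delta$ is formal.
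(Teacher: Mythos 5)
Your proof is correct and is essentially the paper's own (implicit) argument: the paper states Proposition~\ref{prop2:extreme-points} without a separate proof, treating it as immediate from the arithmetic interpretation \eqref{eq2:arith-interpretation}, which is precisely the identity $\eta_K\circ\Phi_\Delta = \mu_K^p(\cdot)/\det(\cdot)^{1/2}$ that you establish. The details you supply --- invariance of $S(v)/\norm{\alpha,\beta}$ over all integral representatives of a cusp, attainment of the infimum via Theorem~\ref{Mendoza}(1), and the ray-constancy argument justifying the restriction to a fixed determinant --- are exactly the steps the paper leaves to the reader.
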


\section{Vogtmann's results}\label{sec3:Vogtmann}

This section contains a brief summary of results of Vogtmann in \cite{Vogtmann}.  The next theorem gives an effective method to find a cellular fundamental domain of $\Gamma(\lambda)\backslash \partial H(\lambda)$.  Given a finite cusp $\lambda$ of $K$, let $n$ be the smallest positive integer such that $\alpha=n\lambda\in\mathcal O_K$, and let
$$
L_\lambda = \begin{pmatrix}\alpha&1\\ n&0\end{pmatrix}.
$$
For $\lambda=\infty$, we set $\alpha=1$, $n=0$ and $L_\infty$ the identity matrix.  The matrix $L_\lambda$ will be regarded as an element in $\text{GL}_2(\C)$ and so acting on $\mathrm H^3$.

\begin{theorem}\cite[Theorem (4.9)]{Vogtmann}\label{thm3:Vogtmann}
Suppose that $|d_K|>4$.
Let $\lambda$ be a cusp of $K$, and $\mathcal D$ be a fundamental domain of the lattice $n\langle\alpha,n\rangle^{-2}$ (or $\mathcal O_K$ if $\lambda = \infty$) in $\C$.  A fundamental domain for the action of $\Gamma(\lambda)$ on $\partial H(\lambda)$ is given by the union of the cells $H(\lambda)\cap H(L_\lambda(\gamma/m))$ such that
\begin{enumerate}
\item[(i)] $m$ is a rational  integer satisfying $0<m<\norm{\alpha,n}(-d_K/2)$;
\item[(ii)] $\gamma\in\mathcal O_K$ and $\gamma/m\in\mathcal D$;
\item[(iii)] $\dim{\big(H(\lambda)\cap H(L_\lambda(\gamma/m))\big)}=2$.
\end{enumerate}
\end{theorem}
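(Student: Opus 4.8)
The plan is to transport the whole picture to the cusp at infinity, where the minimal set is transparent, and then to read off conditions (i)--(iii) from the geometry there. Since $L_\lambda\cdot\infty=\alpha/n=\lambda$ and $L_\lambda$ has entries in $\mathcal O_K$ with $\det L_\lambda=-n\neq0$, Proposition~\ref{prop transformation rule} lets me compare $\dist(L_\lambda\cdot P,L_\lambda\cdot\nu)$ with $\dist(P,\nu)$ up to an explicit ideal-norm factor. In these ``$\infty$-coordinates'' one has $\dist(\cdot,\infty)=1/\zeta$, and by Remark~\ref{rmk H(infty)} the set $H(\infty)$ is the region lying above every hemisphere $S(\infty,\nu)$; thus $\partial H(\infty)$ is the upper envelope of these hemispheres and each $2$-cell $H(\infty)\cap H(\nu)$ is a piece of one of them. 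Writing a finite cusp as $\nu=\gamma/m$ with $m\in\Z_{>0}$ and $\gamma\in\mathcal O_K$, the transformation rule turns the bisector between $\lambda$ and $\mu:=L_\lambda\cdot\nu$ into a sphere centred at $\gamma/m$ with squared radius $N_K(\mathfrak b)/(m^2 N_K(\mathfrak a))$, where $\mathfrak a=\langle\alpha,n\rangle$ and $\mathfrak b=\langle\alpha\gamma+m,\,n\gamma\rangle$ (using Remark~\ref{rmk2:geom-interpretation} for $\dist(P,\nu)$).

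Next I would pin down the stabilizer. Conjugating a translation $\left(\begin{smallmatrix}1&s\\0&1\end{smallmatrix}\right)$ by $L_\lambda$ produces $\left(\begin{smallmatrix}\alpha s+1 & -\alpha^2 s/n\\ ns & 1-\alpha s\end{smallmatrix}\right)$, which lies in $\mathrm{SL}_2(\mathcal O_K)$ precisely when $ns,\ \alpha s,\ \alpha^2 s/n\in\mathcal O_K$, and this is exactly the condition $s\in n\langle\alpha,n\rangle^{-2}$. Hence, after moving to $\infty$-coordinates, the unipotent part of $\Gamma(\lambda)$ acts on $\C$ by translations by the lattice $n\langle\alpha,n\rangle^{-2}$ (the remaining elements come only from the unit $-1$, as $|d_K|>4$, and act trivially on $\mathrm H^3$); for $\lambda=\infty$ one has $L_\infty=I$ and the lattice is simply $\mathcal O_K$. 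Restricting to cells whose centre $\gamma/m$ lies in a fundamental domain $\mathcal D$ of this lattice therefore selects one representative per $\Gamma(\lambda)$-orbit of cells, which is condition (ii).

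It remains to make the candidate list finite and to discard degenerate cells. Taking $\nu=\infty$ in the transformation rule gives $\dist(L_\lambda\cdot P,\lambda)=\frac{n}{N_K(\mathfrak a)}\cdot\frac1\zeta$, and since $\sqrt{|d_K|/2}$ is an upper reduction constant (Theorem~\ref{Mendoza}(2)), every point of $H(\lambda)$ satisfies $\dist(\cdot,\lambda)\le\sqrt{|d_K|/2}$; transporting this back forces $\zeta\ge \frac{n}{N_K(\mathfrak a)}\sqrt{2/|d_K|}$ on the envelope. A hemisphere whose apex lies strictly below this height can never surface on $\partial H(\lambda)$, so substituting the radius $N_K(\mathfrak b)/(m^2 N_K(\mathfrak a))$ and taking $m$ to be the least positive rational denominator of $\nu$ yields the bound $0<m<N_K(\langle\alpha,n\rangle)(-d_K/2)$ of condition (i). Condition (iii) then simply keeps the cells $H(\lambda)\cap H(L_\lambda(\gamma/m))$ that are honestly $2$-dimensional, throwing away those spheres that, though admissible under (i)--(ii), are submerged by their neighbours.

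The hard part will be the passage from this geometric height bound to the clean arithmetic inequality in (i). The radius involves $N_K(\mathfrak b)$, and the elementary containment $\langle nm\rangle\subseteq\mathfrak b$ only gives $N_K(\mathfrak b)\le n^2m^2$, which is too crude to bound $m$; extracting $0<m<N_K(\mathfrak a)(-d_K/2)$ requires exploiting the minimality of the denominator $m$ (so that $\gamma$ and $m$ share no rational factor) to replace this quadratic estimate by one linear in $m$. A secondary point demanding care is checking that the retained cells tile $\partial H(\lambda)$ exactly once modulo $\Gamma(\lambda)$, rather than merely covering it; for this I would rely on the fact that $\MM$ is a genuine cellular retract of $\mathrm H^3$, so that its $2$-cells are precisely the nondegenerate faces $H(\lambda)\cap H(\mu)$ singled out by (iii).
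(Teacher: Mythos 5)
Your proposal is correct, and it is essentially the argument behind the cited result rather than an alternative to it: the paper itself gives no proof here --- Theorem~\ref{thm3:Vogtmann} is quoted verbatim from Vogtmann --- but every ingredient you use (conjugation to $\infty$-coordinates by $L_\lambda$; the hemisphere of squared radius $\norm{L_\lambda(\gamma,m)}/(m^2\,\norm{\alpha,n})$, which is Remark~\ref{rmk H_lambda(infty)}; your computation that the unipotent part of $L_\lambda^{-1}\Gamma(\lambda)L_\lambda$ acts by translations by $n\langle\alpha,n\rangle^{-2}$, which is where $|d_K|>4$ enters, as in Remark~\ref{Iinfinity}; and the height bound forced by the upper reduction constant of Theorem~\ref{Mendoza}(2)) is exactly the Vogtmann machinery the paper assembles in Sections~\ref{sec2:geometric-tools} and~\ref{sec3:Vogtmann}. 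The one step you leave open as ``the hard part,'' namely the bound $\norm{\alpha\gamma+m,n\gamma}\leq n^2m$ that is \emph{linear} in the minimal denominator $m$, is not something you need to invent: it is precisely Lemma~\ref{lem 4.3 Vogtmann}, stated in the paper (Vogtmann's Lemma 4.3) expressly ``to bound the size of the hemispheres,'' and its proof is independent of the theorem, so citing it involves no circularity. With it your chain of inequalities closes, and the strictness in condition (i) comes from condition (iii) exactly as you suggest: a $2$-dimensional cell contains points strictly below the apex of its hemisphere, so the radius strictly exceeds the minimal height permitted on $H_\lambda(\infty)$, yielding $m<\norm{\alpha,n}(-d_K/2)$. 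Your final worry about tiling versus mere covering can also be dispatched without appealing to the retract property: each element of $\Gamma(\lambda)$ moves the center of a cell by a lattice vector, and $\mathcal D$ meets each lattice orbit of centers exactly once, so the admissible cells are pairwise $\Gamma(\lambda)$-inequivalent while their translates exhaust all $2$-cells of $\partial H_\lambda(\infty)$.
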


The fundamental domain for $\Gamma(\lambda)$ given in the above theorem will be denoted by $I(\lambda)$.

\begin{remark} \label{Iinfinity}
The case $\lambda = \infty$ is missing in the statement of \cite[Theorem (4.9)]{Vogtmann}.   However, the assumption $\vert d_K\vert > 4$ implies that $\Gamma(\infty)$ is the group of translations of $\C$ by elements in $\mathcal O_K$.  As a result, $\mathcal D$ can be chosen to be a fundamental domain of $\mathcal O_K$ in $\C$ when $\lambda = \infty$.
\end{remark}

\begin{remark}
When $|d_K| = -4$ and $-3$, $\Gamma(\infty)$ contains isometries other than the translations by elements of $\mathcal O_K$, namely the rotations by $\pi$ and $2\pi/3$ respectively.  In these two cases, we obtain a fundamental domain of $\Gamma(\lambda)$ on $\partial H(\lambda)$ by first applying the theorem and then taking the quotient of the resulting space by the rotations.
\end{remark}

When $K$ has class number one, there is only one $\mathrm{SL}_2(\mathcal O_K)$-orbit of cusps. To compute $\gamma_K^p$ in this case, it follows from \eqref{eq2:1st-red-Mendoza}  that it is sufficient to determine a cellular fundamental domain for $\Gamma(\infty)\backslash \MM$, and Theorem \ref{thm3:Vogtmann} tells us how to do exactly that.  Moreover, for determining the projective extreme points, we have to locate the vertices of this fundamental domain, modulo the action of $\Gamma(\infty)$.

In the general case when the class number $h_K$ of $K$ is not one, we should determine the $h_K$  fundamental domains $\Gamma(\lambda_i)\backslash \partial H(\lambda_i)$ for each $\lambda_i$. If $\lambda\neq\infty$, the minimal set $H(\lambda)$ is not as nice as $H(\infty)$, graphically speaking, because $H(\lambda)$ cannot be seen directly from the top.  For these $\lambda$, we will follow the proof of \cite[Theorem (4.9)]{Vogtmann} to construct a cellular fundamental domain for a conjugate of $\Gamma(\lambda)$ which is visible directly from the top, and then apply an isometry to obtain a cellular fundamental domain of $\Gamma(\lambda)$ on $\partial H(\lambda)$.  We briefly explain this procedure below.

The \emph{$\lambda$-distance} from a point $P\in\mathrm H^3$ to a cusp $\mu$ is defined to be
\begin{equation*}
\dist_\lambda(P,\mu)= \dist(L_\lambda(P),L_\lambda(\mu)).
\end{equation*}
For $\mu,\nu\in\mathbb P^1(K)$ we define
\begin{align*}
  S_\lambda(\mu,\nu)&=\{P\in\mathrm H^3\;:\; \dist_\lambda(P,\mu)=\dist_\lambda(P,\nu)\},\text{ and}\\
  H_\lambda(\mu)&=\{P\in\mathrm H^3\;:\; \dist_\lambda(P,\mu)\leq\dist_\lambda(P,\nu)\quad\forall\,\nu\in\mathbb P^1(K)\}.
\end{align*}
The set $H_\lambda(\mu)$ is called the \emph{$\lambda$-minimal set of $\mu$}.

\begin{remark}\label{rmk H_lambda(infty)}
As in Remark \ref{rmk H(infty)}, $S_\lambda(\infty,\mu)$ is a hemisphere centered at $\mu=\gamma/\delta$ with radius the square root of ${\norm{L_\lambda(\gamma,\delta)}}/({\norm{\alpha,n}|\delta|^2})$. Furthermore, $H_\lambda(\infty)$ is the set of points in $\mathrm H^3$ which lie above all hemispheres $S_\lambda(\infty,\mu)$ for every finite cusp $\mu$.  Therefore, $H_\lambda(\infty)$ is visible directly from the top.
\end{remark}

\begin{remark} \label{Ilambda}
Note that
\begin{equation}\label{eq L(H_lambda(mu))=H(L(mu))}
  L_\lambda(H_\lambda(\mu))=H(L_\lambda(\mu)).
\end{equation}
Since $L_\lambda(\infty) = \lambda$, we have $L_\lambda(H_\lambda(\infty)) = H(\lambda)$; thus the cell structure of $\partial H(\lambda)$ is the same as that of $\partial H_\lambda(\infty)$.   This means that if $I_\lambda$ is a fundamental domain for $L_\lambda^{-1}\Gamma(\lambda)L_\lambda$ on $\partial H_\lambda(\infty)$, then $L_\lambda(I_\lambda)$ will be a fundamental domain for $\Gamma(\lambda)$ on $\partial H(\lambda)$.  Typically we will take $I_\lambda$ to be the union of the sets $H_\lambda(\infty)\cap H_\lambda(\gamma/m)$ over the cusps $\gamma/m$ which satisfies (i), (ii) and (iii) in Theorem \ref{thm3:Vogtmann}. Note that $L_\lambda^{-1}(H(\lambda)\cap H(L_\lambda(\gamma/m)))=H_\lambda(\infty)\cap H_\lambda(\gamma/m)$ by \eqref{eq L(H_lambda(mu))=H(L(mu))}, and so this $I_\lambda$ is indeed a fundamental domain for $L_\lambda^{-1}\Gamma(\lambda)L_\lambda$ on $\partial H_\lambda(\infty)$.
\end{remark}

In some occasions, however, we can exploit the symmetries between the cusps and obtain $I_\lambda$ in a more conveniently manner.   For example, using the equality  $\dist_{\bar\lambda}((\bar z,\zeta), \bar\mu)=\dist_{\lambda}((z,\zeta), \mu)$, one obtains the following lemma.

\begin{lemma}\cite[Corollary (5.5)]{Vogtmann} \label{lem H_lambda and H_bar-lambda}
  $H_\lambda(\infty)$ is the reflection of $H_{\bar\lambda}(\infty)$ through the plane $\mathrm{Im}(z)=0$.
\end{lemma}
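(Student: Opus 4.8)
The plan is to make the reflection explicit and then reduce everything to the distance identity recorded just above the lemma. Reflection through the plane $\mathrm{Im}(z)=0$ is the involution $R\colon\mathrm H^3\to\mathrm H^3$, $R(z,\zeta)=(\bar z,\zeta)$, so the assertion $H_\lambda(\infty)=R\big(H_{\bar\lambda}(\infty)\big)$ is equivalent to the equivalence
$$P\in H_\lambda(\infty)\iff R(P)\in H_{\bar\lambda}(\infty)\qquad\text{for all }P\in\mathrm H^3.$$
The engine of the proof is the conjugation identity $\dist_{\bar\lambda}(R(P),\bar\mu)=\dist_\lambda(P,\mu)$, valid for every $P\in\mathrm H^3$ and $\mu\in\mathbb P^1(K)$; this is exactly the equality stated before the lemma, which I take as the point of departure.

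For completeness I would first derive that identity from the definitions, using three ingredients. First, the formula defining $\dist(P,\lambda)$ is reflection-invariant, $\dist(R(P),\bar\mu)=\dist(P,\mu)$, because $|\bar\beta\bar z-\bar\alpha|=|\beta z-\alpha|$ and $\norm{\bar\alpha,\bar\beta}=\norm{\alpha,\beta}$, complex conjugation preserving both absolute values and the ideal norm. Second, for any $g\in\mathrm{GL}_2(\C)$ the entrywise conjugate $\bar g$ satisfies $\bar g\cdot R(P)=R(g\cdot P)$ on $\mathrm H^3$ and $\bar g\cdot\bar\mu=\overline{g\cdot\mu}$ on the boundary, as one checks by conjugating the two displayed formulas for the $\mathrm{GL}_2(\C)$-action. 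Third, $L_{\bar\lambda}=\bar L_\lambda$, since the smallest positive integer $n$ with $n\lambda\in\mathcal O_K$ also satisfies $n\bar\lambda\in\mathcal O_K$. Combining the last two facts gives $L_{\bar\lambda}(R(P))=R(L_\lambda(P))$ and $L_{\bar\lambda}(\bar\mu)=\overline{L_\lambda(\mu)}$, and then applying the first at the point $L_\lambda(P)$ and cusp $L_\lambda(\mu)$ yields $\dist_{\bar\lambda}(R(P),\bar\mu)=\dist(R(L_\lambda(P)),\overline{L_\lambda(\mu)})=\dist(L_\lambda(P),L_\lambda(\mu))=\dist_\lambda(P,\mu)$.

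With the identity in hand, the set-level conclusion is formal. Since $\overline{\infty}=\infty$, the case $\mu=\infty$ gives $\dist_{\bar\lambda}(R(P),\infty)=\dist_\lambda(P,\infty)$. Because $K$ is stable under complex conjugation, $\mu\mapsto\bar\mu$ is an involution of $\mathbb P^1(K)$, so as $\nu$ runs over $\mathbb P^1(K)$ so does $\bar\nu$; hence the defining inequalities $\dist_{\bar\lambda}(R(P),\infty)\le\dist_{\bar\lambda}(R(P),\nu)$ for all $\nu$ translate, term by term via the identity, into $\dist_\lambda(P,\infty)\le\dist_\lambda(P,\mu)$ for all $\mu$. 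This says $R(P)\in H_{\bar\lambda}(\infty)\iff P\in H_\lambda(\infty)$, and since $R$ is an involution this is precisely $H_\lambda(\infty)=R\big(H_{\bar\lambda}(\infty)\big)$.

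I do not expect a genuine obstacle here: once the conjugation identity is granted, the argument is a bookkeeping check that $R$ intertwines the two minimal-set conditions. The only points requiring a moment's care are the norm-invariance $\norm{\bar\alpha,\bar\beta}=\norm{\alpha,\beta}$ and the equality $L_{\bar\lambda}=\bar L_\lambda$, that is, that the index $n$ is unchanged under conjugation; both rest on $\mathcal O_K$ being stable under the nontrivial automorphism of $K$.
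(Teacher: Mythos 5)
Your proof is correct and takes essentially the same route as the paper: the paper deduces the lemma from exactly the conjugation identity $\dist_{\bar\lambda}((\bar z,\zeta),\bar\mu)=\dist_{\lambda}((z,\zeta),\mu)$ that you use as your engine, leaving its verification to the cited result of Vogtmann. Your extra work deriving that identity (reflection-invariance of $\dist$, compatibility of entrywise conjugation with the $\mathrm{GL}_2(\C)$-action, and $L_{\bar\lambda}=\overline{L_\lambda}$) and the formal set-level bookkeeping are sound and simply fill in the details the paper omits.
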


For $\mu=\gamma/\delta\in K$, let $[\mu]$ be the class of the ideal $\langle\gamma,\delta\rangle$.  One finds a useful application of Lemma
\ref{lem H_lambda and H_bar-lambda} when the ideal class group $\cl_K$ is $\{[\infty],[\lambda],[\bar\lambda]\}$ for some cusp $\lambda$.  In this case, finding $\gamma_K^p$ requires only the maximum of $\eta_K$ in $\partial H(\infty)$ and in $\partial H(\lambda)$.  A less obvious type of symmetry follows from the following theorem which is essentially Proposition 5.2 and Theorem 5.6 in \cite{Vogtmann} together.  We provide a sketch of its proof since certain features of it  will be used in the next section.

\begin{theorem}\cite{Vogtmann}\label{thm class number two Vogtmann}
Let $\lambda$ be a cusp of $K$, and suppose that $[\lambda]^2=1$. If $\mu$ and $\nu$ are cusps of $K$ such that $[\mu][\lambda]=[\nu]$, then there exists $h\in\mathrm{GL}_2(\mathcal O_K)$ such that $h(\mu) = \nu$ and $h$ is an isomorphism sending $H(\mu)$ to $H(\nu)$.  In particular, $\partial H(\mu)$ is isomorphic to $\partial H(\nu)$.
\end{theorem}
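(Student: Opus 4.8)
The plan is to realize the involution $[\mu]\mapsto[\mu][\lambda]$ on the ideal class group by a single explicit integral matrix that acts as a similarity of $\mathrm H^3$ and permutes the cusps accordingly (an Atkin--Lehner type map), and then to correct it by an element of $\Gamma$ so that it sends $\mu$ exactly to $\nu$. The hypothesis $[\lambda]^2=1$ is precisely what allows such a matrix to be written with entries in $\mathcal O_K$: it forces $\mathfrak a^2$ to be principal for an ideal $\mathfrak a$ representing $[\lambda]$, so that the determinant can be taken to be a single element of $\mathcal O_K$. (Strictly, the resulting $h$ lies in $\mathrm{GL}_2(K)$ with integral entries, i.e.\ in $\mathrm M_2(\mathcal O_K)\cap\mathrm{GL}_2(K)$, which is exactly the setting of Proposition~\ref{prop transformation rule}; $\det h$ is a unit only when $[\lambda]=1$.)

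First I would fix an integral ideal $\mathfrak a=\langle\alpha,\beta\rangle$ in the class $[\lambda]$. Since $[\lambda]^2=1$, the ideal $\mathfrak a^2$ is principal, say $\mathfrak a^2=(\theta)$ with $\theta\in\mathcal O_K$, so $\norm{\theta}=|\theta|^2$ and $N_K(\mathfrak a)=|\theta|$. Because $\alpha\mathfrak a+\beta\mathfrak a=\langle\alpha,\beta\rangle\mathfrak a=\mathfrak a^2=(\theta)$ contains $\theta$, I can solve $\alpha d-\beta b=\theta$ with $b,d\in\mathfrak a$, and set $W=\left(\begin{smallmatrix}\alpha&b\\\beta&d\end{smallmatrix}\right)$. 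Then $W\in\mathrm M_2(\mathcal O_K)$, its four entries lie in $\mathfrak a$ while its first column already generates $\mathfrak a$, so its content is exactly $\mathfrak a$, and $\det W=\theta\neq0$; thus Proposition~\ref{prop transformation rule} applies to $W$.

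The decisive step is the \emph{uniform norm identity} $\langle W\rho\rangle=\mathfrak a\,\langle\gamma,\delta\rangle$ for every cusp $\rho=\gamma/\delta$, where $\langle W\rho\rangle$ denotes the ideal generated by the entries $\ell_1=\alpha\gamma+b\delta$, $\ell_2=\beta\gamma+d\delta$ of $W\left(\begin{smallmatrix}\gamma\\\delta\end{smallmatrix}\right)$. Writing $\mathfrak c=\langle\gamma,\delta\rangle$, the inclusion $\langle W\rho\rangle\subseteq\mathfrak a\mathfrak c$ is clear since all entries of $W$ lie in $\mathfrak a$, while the adjugate identities $d\ell_1-b\ell_2=\theta\gamma$ and $-\beta\ell_1+\alpha\ell_2=\theta\delta$ give $\mathfrak a^2\mathfrak c=\theta\mathfrak c\subseteq\langle W\rho\rangle$. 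To upgrade the first inclusion to equality I would localize at each prime $\mathfrak p$: over the DVR $\mathcal O_{K,\mathfrak p}$ with uniformizer $\pi$ and $e=v_{\mathfrak p}(\mathfrak a)$, all entries of $W$ lie in $(\pi^{e})$, so $W=\pi^{e}W'$ with $W'\in\mathrm M_2(\mathcal O_{K,\mathfrak p})$; since $v_{\mathfrak p}(\det W)=v_{\mathfrak p}(\theta)=2e$ one gets $v_{\mathfrak p}(\det W')=0$, i.e.\ $W'\in\mathrm{GL}_2(\mathcal O_{K,\mathfrak p})$, whence $\langle W'\rho\rangle_{\mathfrak p}=\mathfrak c_{\mathfrak p}$ and therefore $\langle W\rho\rangle_{\mathfrak p}=\pi^{e}\mathfrak c_{\mathfrak p}=(\mathfrak a\mathfrak c)_{\mathfrak p}$. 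Granting this identity, Proposition~\ref{prop transformation rule} yields, for every cusp $\rho$ and every $P\in\mathrm H^3$,
\[
\dist(W\cdot P,\,W\cdot\rho)=\frac{|\det W|\,\norm{\gamma,\delta}}{\norm{\alpha\gamma+b\delta,\,\beta\gamma+d\delta}}\,\dist(P,\rho)=\frac{|\theta|}{N_K(\mathfrak a)}\,\dist(P,\rho)=\dist(P,\rho).
\]
As $W\in\mathrm{GL}_2(K)$ permutes $\mathbb P^1(K)$ bijectively, comparing the minimizing cusps shows $P\in H(\rho)\iff W\cdot P\in H(W\cdot\rho)$, so $W(H(\rho))=H(W\cdot\rho)$ for every cusp $\rho$.

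Finally I would land on $\nu$ exactly. By the content computation $[W\cdot\mu]=[\mu][\lambda]=[\nu]$, so $W\cdot\mu$ and $\nu$ lie in the same $\Gamma$-orbit by the bijection $\mathrm{SL}_2(\mathcal O_K)\backslash\mathbb P^1(K)\cong\cl_K$; choosing $g\in\Gamma$ with $g\cdot(W\cdot\mu)=\nu$ and setting $h=gW$ gives $h\in\mathrm M_2(\mathcal O_K)\cap\mathrm{GL}_2(K)$ with $\det h=\theta$ and $h(\mu)=\nu$. Since $g$ preserves every cusp-distance (Proposition~\ref{prop transformation rule}) and so does $W$, the composite $h$ satisfies $h(H(\mu))=H(\nu)$; being a Möbius map it carries each equidistant plane $S(\rho,\sigma)$ to $S(h\rho,h\sigma)$, hence is a cellular isomorphism of $\MM$ restricting to $\partial H(\mu)\xrightarrow{\ \sim\ }\partial H(\nu)$. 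I expect the one genuinely arithmetic obstacle to be the uniform identity $\langle W\rho\rangle=\mathfrak a\langle\rho\rangle$ for \emph{all} cusps simultaneously: the sandwich $\mathfrak a^2\mathfrak c\subseteq\langle W\rho\rangle\subseteq\mathfrak a\mathfrak c$ is routine, but the local promotion to equality rests entirely on $v_{\mathfrak p}(\det W)=2\,v_{\mathfrak p}(\mathfrak a)$, i.e.\ on $(\det W)=\mathfrak a^2$, which is available only because $[\lambda]^2=1$; once this is secured, the rest reduces to Proposition~\ref{prop transformation rule} and the cusp/ideal-class dictionary.
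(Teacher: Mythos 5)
Your proposal is correct and takes essentially the same route as the paper's own sketch: build an integral matrix of content $\mathfrak{a}$ whose determinant generates $\mathfrak{a}^2$ (this is exactly where $[\lambda]^2=1$ enters), deduce the uniform identity $\langle W\rho\rangle=\mathfrak{a}\langle\rho\rangle$, conclude distance preservation from Proposition~\ref{prop transformation rule}, and finish by composing with an element of $\mathrm{SL}_2(\mathcal O_K)$ that moves $W\cdot\mu$ to $\nu$. The only difference is one of completeness: the paper simply asserts the existence of a matrix $g$ with both rows generating $\mathfrak{a}$ and inducing an isomorphism $\mathcal O_K\oplus\mathcal O_K\to\mathfrak{a}\oplus\mathfrak{a}$, whereas you construct it explicitly from $\mathfrak{a}^2=(\theta)$ and verify the content-scaling identity by localization, which fills in precisely the step the paper leaves to the reader.
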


\begin{proof}[Sketch of proof]
Write $\lambda=\alpha/\beta$. Since $[\lambda]^2=1$, one can prove that there is a matrix
\begin{equation}\label{eq g class number two}
g=\left(\begin{matrix}x&y\\z&w\end{matrix}\right)\in\mathrm{GL}_2(\mathcal O_K)
\quad\text{such that}\quad
  \begin{cases}
    \langle\alpha,\beta\rangle= \langle x,y\rangle= \langle z,w\rangle,\\
    |\det (g)|=\norm{\alpha,\beta},
  \end{cases}
\end{equation}
inducing an isomorphism $g:\mathcal O_K\oplus\mathcal O_K\longrightarrow \langle \alpha,\beta\rangle\oplus \langle \alpha,\beta\rangle$. It follows that $\langle g(s,t)\rangle= \langle xs+yt,zs+wt\rangle =\langle s,t\rangle\langle \alpha,\beta\rangle$ for all $(s,t)\in\mathcal O_K^2$, and hence $[g\cdot\mu]=[\mu][\lambda]=[\nu]$ and $\norm{g(s,t)}=\norm{ s,t}\norm{\alpha,\beta}$. Writing $\mu=\gamma/\delta$ and applying Proposition~\ref{prop transformation rule}, it follows that
\begin{align*}
\dist(g\cdot P,g\cdot\mu)=& \left(\frac{|\det g| \norm{\gamma,\delta}}{\norm{g(\gamma,\delta)}}\right) \dist(P,\mu)=\dist(P,\mu),
\end{align*}
and therefore $g$ is an isomorphism taking $H(\mu)$ to $H(g\cdot\mu)$.  Since $[g\cdot \mu] = [\nu]$, there exists $g' \in \mathrm{SL}_2(\mathcal O_K)$ such that $g'g(\mu) = \nu$.  Thus $h: = g'g$ is an isomorphism sending $H(\mu)$ to $H(\nu)$.
\end{proof}

\begin{remark}\label{rmk using thm class number two}
Assume that $[\lambda]^2 = 1$.
Then we can apply the above theorem to obtain an $h \in \mathrm{GL}_2(\mathcal O_K)$ such that $h(\infty)=\lambda$.
Then $h(I(\infty))$ is a fundamental domain for $\Gamma(\lambda)\backslash \partial H(\lambda)$.
Therefore,
\begin{equation*}
\max_{P\in \partial H(\lambda)} \dist(P,\lambda) = \max_{P\in I(\infty)} \dist(g\cdot P,g\cdot\infty) = \max_{P\in I(\infty)} \dist(P,\infty).
\end{equation*}
Hence there is no need to consider the maximum of $\eta_K$ on $\partial H(\lambda)$ in (\ref{eq2:1st-red-Mendoza}).
Similarly, if $[\nu]=[\lambda][\mu]$, then the maximum of $\eta_K$ on $\partial H(\nu)$ is equal to its maximum on $\partial H(\mu)$.
\end{remark}

We conclude this section with a lemma from algebraic number theory which will help us bound the size of the hemispheres $S_\lambda(\mu,\nu)$.

\begin{lemma}\cite[Lemma 4.3]{Vogtmann}\label{lem 4.3 Vogtmann}
Let $\lambda$ be a cusp of $K$, and $n$ be the smallest positive integer such that $n\lambda \in \mathcal O_K$.  Then, for any finite cusp $\mu$ of $K$,
$$
\norm{L_\lambda(\gamma,m)}=\norm{\alpha\gamma+m,n\gamma}\leq
\begin{cases}
  m& \mbox{ if $\lambda=\infty$};\\
  n^2 m& \mbox{ if $\lambda\neq\infty$},
\end{cases}
$$
where $m$ is the smallest positive integer such that $m\mu\in\mathcal O_K$.
\end{lemma}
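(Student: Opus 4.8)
The plan is to push all the arithmetic into a single clean estimate, namely $\norm{\gamma,m}\le m$ for the $\lambda=\infty$ situation, and then to deduce the general case $\lambda\neq\infty$ from it by a short ideal-theoretic manipulation. Throughout I write $\mu=\gamma/m$ with $\gamma=m\mu\in\mathcal O_K$, so that $m$ is the least positive integer clearing the denominator of $\mu$.

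First I would translate the minimality of $m$ into a divisibility statement. Since $\{k\in\Z:\,k\mu\in\mathcal O_K\}$ is a subgroup of $\Z$, it equals $m\Z$, so for every rational prime $p\mid m$ we have $0<m/p<m$ and hence $(m/p)\mu=\gamma/p\notin\mathcal O_K$; that is, $\gamma\notin p\mathcal O_K$. With this in hand I would prove the core estimate $\norm{\gamma,m}\le m$ prime by prime. Setting $\mathfrak a=\langle\gamma,m\rangle$, one has $v_{\mathfrak p}(\mathfrak a)=\min\{v_{\mathfrak p}(\gamma),\,v_{\mathfrak p}(m)\}$, and because $m\in\Z$ the valuation $v_{\mathfrak p}(m)$ is $v_p(m)$ times the ramification index of $\mathfrak p$ over $p$. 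For each rational prime $p\mid m$ I would split into the three cases according to how $p$ decomposes in $\mathcal O_K$ (split, inert, ramified) and use $\gamma\notin p\mathcal O_K$ to bound the total contribution of the primes above $p$ to $N(\mathfrak a)$ by $p^{v_p(m)}$. Multiplying over all $p$ yields $N(\mathfrak a)\mid m$, and in particular $\norm{\gamma,m}\le m$.

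For the reduction to $\lambda\neq\infty$, I would compute $L_\lambda(\gamma,m)=(\alpha\gamma+m,\,n\gamma)$ and note the identity $\alpha\cdot(n\gamma)-n\cdot(\alpha\gamma+m)=-nm$, which shows $nm\in\langle\alpha\gamma+m,n\gamma\rangle$. Together with $n\gamma$ this gives the containment of integral ideals $n\langle\gamma,m\rangle\subseteq\langle\alpha\gamma+m,n\gamma\rangle$. Since a larger integral ideal has a norm dividing (hence not exceeding) that of a smaller one, multiplicativity of the norm gives $\norm{\alpha\gamma+m,n\gamma}\mid N\!\big(n\langle\gamma,m\rangle\big)=n^2\,\norm{\gamma,m}$, and the first part then yields $\norm{\alpha\gamma+m,n\gamma}\le n^2 m$. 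The case $\lambda=\infty$ is immediate, since there $L_\infty$ is the identity and $\norm{L_\infty(\gamma,m)}=\norm{\gamma,m}\le m$.

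I expect the main obstacle to be the prime-by-prime estimate, and specifically the ramified case: there $v_{\mathfrak p}(m)=2v_p(m)$ while $N(\mathfrak p)=p$, and $\gamma\notin p\mathcal O_K=\mathfrak p^2$ only forces $v_{\mathfrak p}(\gamma)\le 1$, so one must verify that $p^{\min\{2v_p(m),\,v_{\mathfrak p}(\gamma)\}}$ still divides $p^{v_p(m)}$, which relies on $v_p(m)\ge1$. Correctly tracking residue degrees and ramification indices so that these local bounds assemble into the global divisibility $N(\mathfrak a)\mid m$ is the one step requiring genuine care; the remaining arguments are formal.
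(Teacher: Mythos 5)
Your proof is correct. Note, however, that the paper itself contains no argument for this lemma: it is quoted directly from Vogtmann (Lemma 4.3 of that reference), so there is no in-paper proof to compare against; your write-up is an independent, self-contained verification. Both of your key steps check out. The translation of minimality of $m$ into $\gamma \notin p\mathcal O_K$ for each rational prime $p \mid m$ is right, and the prime-by-prime bookkeeping does work in all three splitting types: in the split case at least one of the two primes above $p$ avoids $\gamma$, in the inert case $v_{\mathfrak p}(\gamma)=0$ kills the contribution entirely, and in the ramified case the contribution is at most $N(\mathfrak p)=p \leq p^{v_p(m)}$, exactly as you flagged; assembling these gives the divisibility $\norm{\gamma,m} \mid m$, which is even stronger than the stated inequality. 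The reduction step is also sound: the identity $\alpha(n\gamma)-n(\alpha\gamma+m)=-nm$ gives $n\langle\gamma,m\rangle \subseteq \langle\alpha\gamma+m,n\gamma\rangle$, and since containment of integral ideals in a Dedekind domain means divisibility, $\norm{\alpha\gamma+m,n\gamma}$ divides $n^2\,\norm{\gamma,m}$, hence divides (and is at most) $n^2m$. The degenerate checks (e.g.\ $\gamma=0$, where $m=1$ and the ideal is all of $\mathcal O_K$, and the convention $\alpha=1$, $n=0$, $L_\infty=\mathrm{id}$ for $\lambda=\infty$) are also consistent with your argument.
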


\section{Computations}\label{sec4:Computations}

We have already laid down all necessary background to compute $\gamma_K^p$ and to determine the projective extreme forms for an arbitrary imaginary quadratic number field $K$.  Our method for computing $\gamma_K^p$ can be summarized in the following steps:

\begin{enumerate}
  \item[(I)] Find cusps $\lambda_1,\dots,\lambda_{h_K}$ such that $[\lambda_1], \ldots, [\lambda_{h_K}]$ are all the elements in the class group of $K$.
  \item[(II)] For each $1\leq i\leq h_K$, determine the set $I(\lambda_i)$  using Theorem~\ref{thm3:Vogtmann}.
  \item[(III)] Find the minimum of $\eta_K$ on the set of vertices of each $I(\lambda_i)$.
\end{enumerate}

Step (I) is the easiest, and Step (III) will follow immediately once we construct $I(\lambda_i)$ in Step (II).  For this step, we will use graphical aid (e.g. Sage) to help us produce a candidate $J_{\lambda_i}$ for $I_{\lambda_i}$.  Once we prove that $J_{\lambda_i}$ is indeed  $I_{\lambda_i}$, we may apply $L_{\lambda_i}$ to obtain $I(\lambda_i)$ (see Remark~\ref{Ilambda}).  This will be further explained in (II.1) to (II.4) below.

In what follows, if we write a cusp $\mu$ as $\delta/k$, then $k$ is the smallest positive integer such that $k\mu\in\mathcal O_K$.    Fix a cusp $\lambda = \alpha/n$, and let $\{\pi_1, \pi_2\}$ be an integral basis for the ideal $n\langle \alpha, n \rangle^{-2}$ (or $\mathcal O_K$ if $\lambda = \infty$). If $\lambda = \infty$, we choose $\pi_1$ and $\pi_2$ to be 1 and $\omega$ respectively.  Recall that $\omega = \sqrt{-D}$ or $(1 + \sqrt{-D})/2$ if $D \equiv 1, 2$ mod 4 or $D \equiv 3$ mod 4 accordingly.  Let $\mathcal D$ be $\{x\pi_1 + y\pi_2: 0 \leq x, y < 1\}$.

\begin{enumerate}
\item[(II.1)] Use Sage to draw all hemispheres $S_\lambda(\gamma/m, \infty)$, with $0 < m < -N(\langle \alpha, n \rangle)\frac{d_K}{2}$ and $\gamma/m \in \mathcal D$.  The number of such hemispheres is in the order of $\vert d_K\vert^2$.  However, most of them will be covered by the others.

\item[(II.2)] For each $S_\lambda(\gamma/m, \infty)$ drawn in (II.1), use Sage to draw the additional hemispheres $S_\lambda(\gamma/m + \mu, \infty)$ for all $\mu \in \{\pm\pi_1, \pm\pi_2, \pm(\pi_1 + \pi_2), \pm(\pi_1 - \pi_2) \}$.  As in (II.1), most of these hemispheres are covered by the others.

\item[(II.3)]  For the sake of discussion, we call the hemispheres from (II.1) and (II.2) {\em black} and {\em white}, respectively.  The result of (II.1) and (II.2), when seen directly from the top, will be a picture showing a set $J_\lambda$, which is a union of subsets $A_\lambda(\gamma/m)$ of some black hemispheres $S_\lambda(\gamma/m, \infty)$,  surrounded by some white hemispheres.   Figure~\ref{fig D=39 H(infty)} shows the resulting picture for the case $K = \Q(\sqrt{-39})$ and $\lambda = \infty$.

\item[(II.4)] Show that $J_\lambda$ is indeed $I_\lambda$.  This is done by showing that every hemisphere $S_\lambda(\delta/k, \infty)$ does not cover strictly any point on $J_\lambda$, and the orthogonal projection of $J_\lambda$ on $\C$ is a fundamental domain of $L_\lambda^{-1}\Gamma(\lambda)L_\lambda$.  The later is quite straightforward, since the main difficulty involves only the determination of the intersections of hemispheres.   For the former,  let $\zeta$ be the height of the lowest point of $J_\lambda$.  The radius of $S_\lambda(\delta/k, \infty)$ is smaller than or equal than $\sqrt{1/k}$ by Lemma~\ref{lem 4.3 Vogtmann}.  Therefore, we only need to check those $S_\lambda(\delta/k, \infty)$  such that $k$ is bounded above by $1/\zeta^2$ and $\delta/k$ is close enough to some $A_\lambda(\gamma/m)$.  There are only finitely many these hemispheres, and hence the checking can be done in a finite number of steps.
\end{enumerate}

The above indicates that the complexity of Step (II) increases when $|d_K|$ increases since more hemispheres will be needed.  However, sometimes the computation can be simplified by exploiting the symmetries explained in Lemma~\ref{lem H_lambda and H_bar-lambda} and Theorem~\ref{thm class number two Vogtmann}.  As an illustration of our method, we determine $\gamma_K^p$ and the corresponding projective extreme forms when $K=\Q(\sqrt{-39})$, whose class number is four.

Fix $K = \Q(\sqrt{-39})$ from now on.  We follow the notation introduced in the previous sections. In particular, for $\delta,\beta\in\mathcal O_K$, $[\delta/\beta]$ denotes the class of the ideal $\langle\delta,\beta\rangle$.  The class group of $K$ is cyclic of order $4$ generated by $\theta=[\omega/2]$ with $\omega=(1+\sqrt{-39})/2$.   More precisely,
$$
\cl_K=\left\{\theta^0=[\infty], \;\theta^1=\left[\frac\omega2\right], \;\theta^2=\left[\frac{1+\omega}3\right], \;\theta^3=\left[\frac{1+\omega}2\right]\right\}.
$$

\smallskip
\noindent \fbox{Case $\lambda_0:=\infty$}
As is explained in Remark \ref{Iinfinity}, we may choose $\mathcal D$ to be $\{x+y\omega\in\C\;:\;0\leq x,y<1\}$.
We need to consider $H(\infty)\cap H(\gamma/m)$ for $0<m<39/2=19.5$ and $\gamma \in \mathcal O_K$ such that $\gamma/m \in \mathcal D$.

\setlength{\unitlength}{1cm}
\begin{figure}[!htb]
\caption{$I(\infty)$ for $K=\Q(\sqrt{-39})$}
\label{fig D=39 H(infty)}
\rule{0.5cm}{0cm}\\
\includegraphics{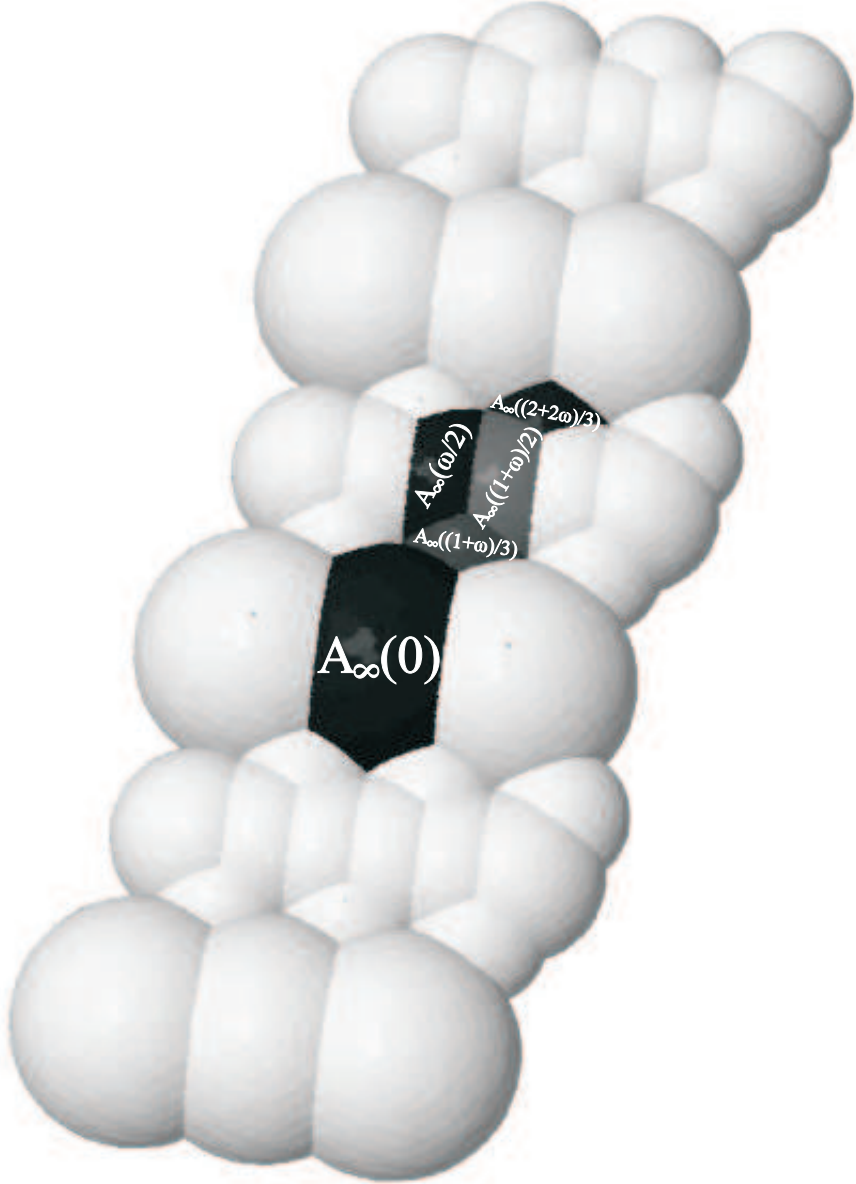}
\end{figure}

Figure~\ref{fig D=39 H(infty)} shows all the black and white hemispheres from (II.1) and (II.2) that are visible directly from the top, and the resulting subset $J_\infty$ from (II.3) which is the union of five $A_\infty(\gamma/m)$, where $\gamma/m$ are $0, \frac{1+\omega}3, \frac{\omega}2, \frac{1+\omega}2$, and $\frac{2+2\omega}3$.  Note that for each of these five $\gamma/m$, $A_\infty(\gamma/m)$ is the subset of $S(\infty, \gamma/m)$ which contains points that are outside all the other black and white hemispheres from (II.1) and (II.2).  The orthogonal projection of $J_\infty$ onto $\mathbb C$ is shown in Figure~\ref{fig D=39 H(infty)inC}.  It is not hard to check that $J_\infty$ is a fundamental domain for the action of $\Gamma(\infty)$ on $\C$.

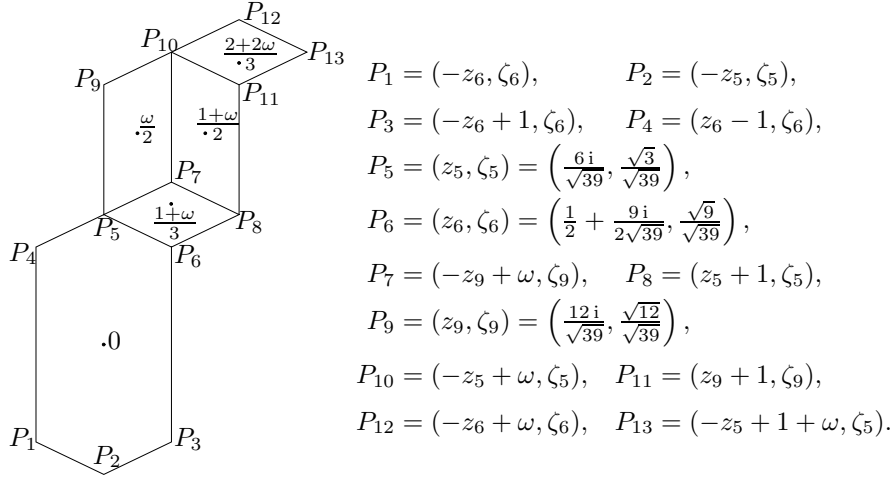
\begin{figure}[!htb]
\caption{Projection of $I(\infty)$ onto $\C$ for $K=\Q(\sqrt{-39})$}
\label{fig D=39 H(infty)inC}
\rule{0.5cm}{0cm}\\
\begin{minipage}{0.38\textwidth}
\setlength{\unitlength}{1.8cm}
\rule{3mm}{0pt}
\begin{picture}(2.5,3.5)
  \path(1.5,1.72)(1.5,0.28)(1,0.04)(0.5,0.28)(0.5,1.72) (1,1.96)(1.5,1.72)(2,1.96)(2,2.92)(1.5,3.16)(1,2.92) (1,1.96)(1.5,2.20)(2,1.96)
  \path(1.5,2.20)(1.5,3.16)(2,3.40)(2.5,3.16)(2,2.92)
  \put(0.30,0.25){$P_1$}  \put(0.30,1.62){$P_4$}  \put(1.52,0.25){$P_3$}  \put(1.52,1.60){$P_6$}  \put(0.92,0.12){$P_2$}  \put(0.92,1.80){$P_5$}  \put(1.97,1.85){$P_8$}  \put(1.52,2.22){$P_7$}  \put(0.78,2.90){$P_9$}  \put(2.02,2.80){$P_{11}$}  \put(1.27,3.20){$P_{10}$}  \put(2.50,3.10){$P_{13}$}  \put(2.03,3.40){$P_{12}$}
  \put(1.00,1.00){\circle*{0.02}} \put(1.03,0.97){$0$}
  \put(1.50,2.04){\circle*{0.02}} \put(1.35,1.85){$\tfrac{1+\omega}{3}$}
  \put(2.00,3.08){\circle*{0.02}} \put(1.87,3.10){$\tfrac{2+2\omega}{3}$}
  \put(1.25,2.56){\circle*{0.02}} \put(1.25,2.56){$\tfrac{\omega}{2}$}
  \put(1.75,2.56){\circle*{0.02}} \put(1.67,2.58){$\tfrac{1+\omega}{2}$}
\end{picture}
\end{minipage}
\begin{minipage}[b]{0.6\textwidth}
\setlength{\unitlength}{1cm}
\newlength{\altura}
\setlength{\altura}{14pt}
$$
\begin{array}{r@{\;=\;}lr@{\;=\;}l}
  \rule{0pt}{\altura}
  P_1&(-z_6,\zeta_6),&
  P_2&(-z_5,\zeta_5),\\
  \rule{0pt}{\altura}
  P_3&(-z_6+1,\zeta_6),&
  P_4&(z_6-1,\zeta_6),\\
  \rule{0pt}{\altura}
  P_5&\multicolumn{3}{l}{\!\!\!(z_5,\zeta_5)= \left(\tfrac{6\mi}{\sqrt{39}},\tfrac{\sqrt3}{\sqrt{39}}\right),}\\
  \rule{0pt}{\altura}
  P_6&\multicolumn{3}{l}{\!\!\!(z_6,\zeta_6)= \left(\tfrac12+\tfrac{9\mi}{2\sqrt{39}},\tfrac{\sqrt9}{\sqrt{39}}\right),}\\
  \rule{0pt}{\altura}
  P_7&(-z_9+\omega,\zeta_9),&
  P_8&(z_5+1,\zeta_5),\\
  \rule{0pt}{\altura}
  P_9&\multicolumn{3}{l}{\!\!\!(z_9,\zeta_9)= \left(\tfrac{12\mi}{\sqrt{39}},\tfrac{\sqrt{12}}{\sqrt{39}}\right),}\\
  \rule{0pt}{\altura}
  P_{10}&(-z_5+\omega,\zeta_5),&
  P_{11}&(z_9+1,\zeta_9),\\
  \rule{0pt}{\altura}
  P_{12}&(-z_6+\omega,\zeta_6),&
  P_{13}&(-z_5+1+\omega,\zeta_5).
\end{array}
$$
\end{minipage}
\end{figure}

We checked that (see Figure~\ref{fig D=39 H(infty)inC}) the height of the lowest point of $J_\infty$ is $\frac{1}{\sqrt{13}}$.
Now, the radius of $S(\delta/k,\infty)$ is $\sqrt{\norm{\delta,k}/k^2}$, which is $\le 1/\sqrt{k}$ by Lemma~\ref{lem 4.3 Vogtmann}. Thus we only need to check finitely many $S(\delta/k, \infty)$ for which $k \leq 13$ and $\vert \delta/k - z\vert < 1/\sqrt{13}$ for some $(z,\zeta) \in J_\infty$.  After carrying out these computations, which we do not include here for brevity, we conclude that $J_\infty$ is indeed the $I(\infty)$ given by Theorem~\ref{thm3:Vogtmann}.

\smallskip

\noindent\fbox{Case $\lambda_1:=\omega/2$} For this cusp,
$$
L_{\lambda_1}=\left(\begin{matrix} \omega&1\\2&0 \end{matrix}\right)
\qquad\text{and}\qquad
2{\langle\omega, 2 \rangle^{-2}}=2\,\Z\oplus\tfrac{1-\omega}2\,\Z.
$$
In this case, $\mathcal D=\{x+y({1-\omega})/{2}\in\C\;:\;0\leq x,y<1\}$ is a fundamental domain of $2{\langle\omega,2\rangle^{-2}}$ in $\C$.

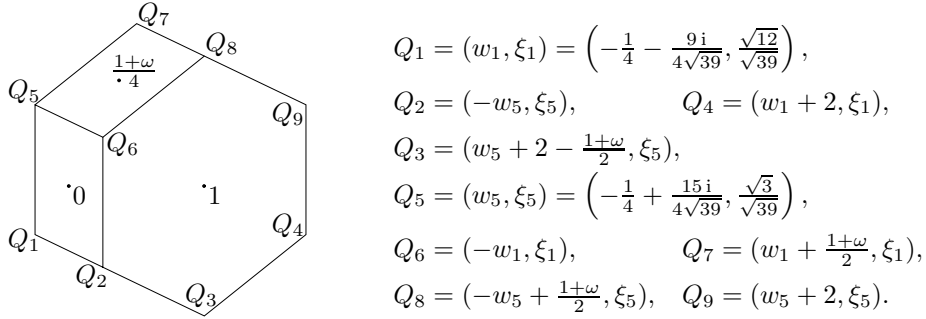
\begin{figure}[!htb]
\caption{Projection of $I_{\lambda_1}$ onto $\C$ for $K=\Q(\sqrt{-39})$}
\label{fig D=39 H(omega/2)inC}
\rule{0.5cm}{0cm}\\
\begin{minipage}{0.38\textwidth}
\setlength{\unitlength}{1.8cm}
\begin{picture}(2.5,2.3)
  \path(0.75,0.40)(1.50,0.04)(2.25,0.64)(2.25,1.60)(1.00,2.20)(0.25,1.60)(0.25,0.64)(0.75,0.40)(0.75,1.36)(0.25,1.60)
  \path(0.75,1.36)(1.50,1.96)
  \put(0.50,1.00){\circle*{0.02}} \put(0.53,0.87){$0$}
  \put(1.50,1.00){\circle*{0.02}} \put(1.53,0.87){$1$}
  \put(0.87,1.78){\circle*{0.02}} \put(0.81,1.80){$\frac{1+\omega}{4}$}
  \put(0.05,0.55){$Q_1$}
  \put(0.55,0.28){$Q_2$}
  \put(1.36,0.14){$Q_3$}
  \put(2.00,0.66){$Q_4$}
  \put(0.05,1.65){$Q_5$}
  \put(0.78,1.26){$Q_6$}
  \put(1.01,2.22){$Q_7$}
  \put(1.51,2.00){$Q_8$}
  \put(2.00,1.48){$Q_9$}
\end{picture}
\end{minipage}
\setlength{\unitlength}{1cm}
\begin{minipage}[b]{0.57\textwidth}
\setlength{\altura}{14pt}
%\noindent \rule{\textwidth}{1pt}
$$
\begin{array}{r@{\;=\;}lr@{\;=\;}l}
  \rule{0pt}{\altura}
  Q_1&\multicolumn{3}{l}{\!\!\!(w_1,\xi_1)= \left(-\tfrac14-\tfrac{9\mi}{4\sqrt{39}},\tfrac{\sqrt{12}}{\sqrt{39}}\right),}\\
  \rule{0pt}{\altura}
  Q_2&(-w_5,\xi_5),&
  Q_4&(w_1+2,\xi_1),\\
  \rule{0pt}{\altura}
  Q_3&\multicolumn{3}{l}{\!\!\!(w_5+2-\tfrac{1+\omega}2,\xi_5),}\\
  \rule{0pt}{\altura}
  Q_5&\multicolumn{3}{l}{\!\!\!(w_5,\xi_5)= \left(-\tfrac14+\tfrac{15\mi}{4\sqrt{39}},\tfrac{\sqrt3}{\sqrt{39}}\right),}\\
  \rule{0pt}{\altura}
  Q_6&(-w_1,\xi_1),&
  Q_7&(w_1+\tfrac{1+\omega}2,\xi_1),\\
  \rule{0pt}{\altura}
  Q_8&(-w_5+\tfrac{1+\omega}2,\xi_5),&
  Q_9&(w_5+2,\xi_5).
\end{array}
$$
\end{minipage}
\end{figure}

From (II.1), (II.2), and (II.3), we obtain $J_{\lambda_1}$ as the union of  $A_{\lambda_1}(0)$, $A_{\lambda_1}(\omega/2)$, and $A_{\lambda_1}(1)$.  Here for $\mu \in \{0, \omega/2, 1\}$, $A_{\lambda_1}(\mu)$ is the subset of $S_{\lambda_1}(\infty, \mu)$ which contains points that are outside all the other hemispheres from (II.1) and (II.2).  The orthogonal projection of $J_{\lambda_1}$ to $\C$ is shown in Figure~\ref{fig D=39 H(omega/2)inC}, which can be seen to be a fundamental domain for the action of $L_{\lambda_1}^{-1}\Gamma(\lambda_1)L_{\lambda_1}$ on $\C$.  With an argument similar to the one used in the previous case, we obtain that
$$
I_{\lambda_1}=A_{\lambda_1}(0)\cup A_{\lambda_1}(\tfrac\omega2)\cup A_{\lambda_1}(1)
\qquad\text{and}\qquad
I(\lambda_1) = L_{\lambda_1}(I_{\lambda_1}).
$$

\smallskip

\noindent \fbox{Case ${\lambda_2}:=(1+\omega)/3$}
Note that $[{\lambda_2}]^2=1$.
Following the proof of Theorem~\ref{thm class number two Vogtmann}, we take
$$
g_2=\begin{pmatrix}
  1+\omega&2-\omega\\ 3&-1-\omega
\end{pmatrix}.
$$
This matrix satisfies the requirements given in \eqref{eq g class number two} since $\langle1+\omega,3\rangle=\langle2-\omega,-1-\omega\rangle$ and $\det(g)=3=N_K (\langle 1 + \omega ,3\rangle)$. Furthermore, $g(\infty)=\lambda_2$ and $g\left(\lambda_2\right)=\infty$.
Finally, Theorem \ref{thm class number two Vogtmann} implies that
$H(\lambda_2)=g_2(H(\infty))$ and $g_2(I(\infty))$ is a fundamental domain for the action of the group $\Gamma(\lambda_2)$ on $\partial H(\lambda_2)$.

\smallskip

\noindent\fbox{Case ${\lambda_3}:=(1+\omega)/2$}
Lemma \ref{lem H_lambda and H_bar-lambda} now yields that $\partial H_{\lambda_3}(\infty)$ is the reflection of $\partial H_{\lambda_1}(\infty)$ through the plane $\mathrm{Im}(z)=0$, since $[\overline{{\lambda}}_3]= \theta^{-3}= \theta= [{\lambda_1}]$.
Thus $L_{\lambda_3} \left(\overline{I_{\lambda_1}}\right) = L_{\lambda_3} \left(\overline{L_{\lambda_1}^{-1}(I(\lambda_1))}\right)$ is a fundamental domain for $\Gamma(\lambda_3)$ on $\partial H(\lambda_3)$.

\bigskip

Summarizing, we have determined the fundamental domains (given by Theorem~\ref{thm3:Vogtmann})
$$
I(\infty)
\qquad\text{and}\qquad
I(\lambda_1)=L_{\lambda_1}(I_{\lambda_1})
=\left(\begin{smallmatrix}\omega&1\\2&0\end{smallmatrix}\right)\cdot I_{\lambda_1}
$$
for $\partial H(\infty)/\Gamma(\infty)$ and $\partial H(\lambda_1)/\Gamma(\lambda_1)$ respectively, and also we have proved that
$$
g_2(I(\infty))
\qquad\text{and}\qquad
L_{\lambda_3}\left(\overline{I_{\lambda_1}}\right)
=\left(\begin{smallmatrix}1+\omega&1\\2&0\end{smallmatrix}\right) \cdot\overline{I_{\lambda_1}}
$$
are fundamental domains for $\partial H(\lambda_2)/\Gamma(\lambda_2)$ and $\partial H(\lambda_3)/\Gamma(\lambda_3)$ respectively.
Let $I$ be the union of these four sets.

We cannot assert that $I$ is a fundamental domain for $\Gamma=\mathrm{SL}_2(\mathcal O_K)$ on the minimal incidence set $\MM$ (see \eqref{eq minimal incidence set}), but it is evident that every point in $\MM$ is $\mathrm{SL}_2(\mathcal O_K)$-equivalent to one point in $I$.
Our next goal is to determine a set of representatives of the set of vertices of $I$ under the action of $\Gamma$.
Recall that the vertices of $I$ are the projective extreme points, which are in a one to one correspondence with the projective extreme forms by Proposition~\ref{prop2:extreme-points}.

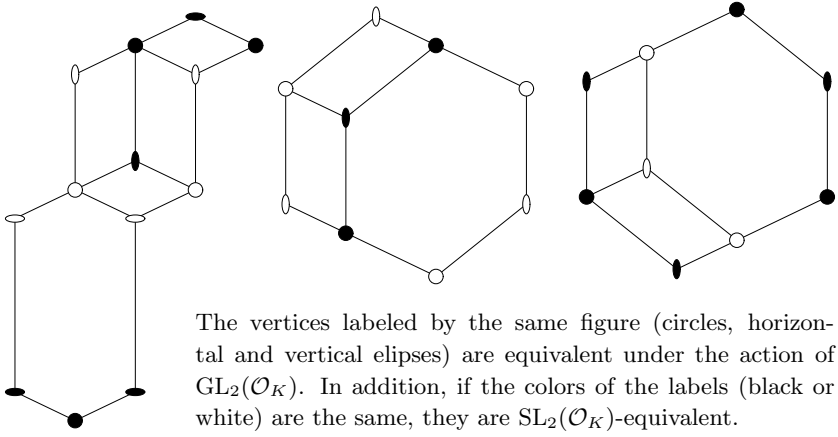
\begin{figure}
\caption{Proj. extreme points under $\mathrm{SL}_2(\mathcal O_K)$-equivalence, $K=\Q(\sqrt{-39})$}
\label{fig extreme points/SL_2}
\rule{0.5cm}{0cm}\\
\newcommand{\circO}{\circle*{0.12}}
\newcommand{\circV}{\circle*{0.12}}
\newcommand{\triaO}{\ellipse*{0.06}{0.16}}
\newcommand{\triaV}{\ellipse*{0.06}{0.16}}
\newcommand{\cuadO}{\ellipse*{0.16}{0.06}}
\newcommand{\cuadV}{\ellipse*{0.16}{0.06}}
\setlength{\unitlength}{1.6cm}
\begin{picture}(7.5,3.5)
\path (1.50,1.72) (1.50,0.28) (1.00,0.04) (0.50,0.28) (0.50,1.72) (1.00,1.96) (1.50,1.72)
      (2.00,1.96) (2.00,2.92) (1.50,3.16) (1.00,2.92) (1.00,1.96) (1.50,2.20) (2.00,1.96)
\path (1.50,2.20) (1.50,3.16) (2.00,3.40) (2.50,3.16) (2.00,2.92)
\path (3.25,1.60) (4.00,1.24) (4.75,1.84) (4.75,2.80) (3.50,3.40)
      (2.75,2.80) (2.75,1.84) (3.25,1.60) (3.25,2.56) (2.75,2.80)
\path (3.25,2.56) (4.00,3.16)
\path (5.75,3.10) (6.50,3.46) (7.25,2.86) (7.25,1.90) (6.00,1.30)
      (5.25,1.90) (5.25,2.86) (5.75,3.10) (5.75,2.14) (5.25,1.90)
\path (5.75,2.14) (6.50,1.54)
\filltype{black}
\put(1.50,3.16){\circO} \put(2.50,3.16){\circO} \put(1.00,0.04){\circO}
\put(0.50,0.28){\cuadO} \put(1.50,0.28){\cuadO} \put(2.00,3.40){\cuadO}
\put(1.50,2.20){\triaO}
\put(3.25,1.60){\circO} \put(4.00,3.16){\circO}
\put(3.25,2.56){\triaO}
\put(5.25,1.90){\circO} \put(7.25,1.90){\circO} \put(6.50,3.46){\circO}
\put(7.25,2.86){\triaO} \put(5.25,2.86){\triaO} \put(6.00,1.30){\triaO}
\filltype{white}
\put(1.00,1.96){\circV} \put(2.00,1.96){\circV}
\put(0.50,1.72){\cuadV} \put(1.50,1.72){\cuadV}
\put(1.00,2.92){\triaV} \put(2.00,2.92){\triaV}
\put(4.00,1.24){\circV} \put(2.75,2.80){\circV} \put(4.75,2.80){\circV}
\put(4.75,1.84){\triaV} \put(2.75,1.84){\triaV} \put(3.50,3.40){\triaV}
\put(5.75,3.10){\circV} \put(6.50,1.54){\circV}
\put(5.75,2.14){\triaV}
\put(2,0){\parbox[b]{8.5cm}{{\small The vertices labeled by the same figure (circles, horizontal and vertical elipses) are equivalent under the action of $\mathrm{GL}_2(\mathcal O_K)$. In addition, if the colors of the labels (black or white) are the same, they are $\mathrm{SL}_2(\mathcal O_K)$-equivalent.}}}
\end{picture}
\end{figure}

It is easy to see that $\{P_1, P_2, P_5, P_6, P_7,P_9\}$ (see Fig. 2) is a set of representatives of the set of vertices of $I(\infty)$ under the action of $\Gamma(\infty)$. Note that the group $\mathrm{GL}_2(\mathcal O_K)$ is generated by the elements of $\Gamma$ and the matrix $\left(\begin{smallmatrix}-1&0\\0&1\end{smallmatrix}\right)$ which acts on $\mathrm H^3$ as $(z,\zeta)\mapsto(-z,\zeta)$.
This implies that $\{P_2, P_5\}$, $\{P_1, P_6\}$, and $\{P_7, P_9\}$ are $\mathrm{GL}_2(\mathcal O_K)$-orbits, and  therefore $\{P_5, P_6, P_9\}$ is a set of representatives of the set of vertices of $I(\infty)$ under the action of $\mathrm{GL}_2(\mathcal O_K)$. These facts are illustrated in the picture on the left in \mbox{Figure \ref{fig extreme points/SL_2}}.

The vertices of $g_2(I(\infty))$ are of the form $\{g_2(P_j)\,:\, 1\leq j\leq 13\}$.
It can be checked that  $A_\infty(\frac{1+\omega}{3})$ on $S((1+\omega)/3,\infty)$ is mapped into itself by $g_2$.
More precisely, $g_2(P_5)=P_8$, $g_2(P_8)=P_5$, $g_2(P_6)=P_6$ and $g_2(P_7)=P_7$.
On the other hand, it is easy to check that $g_2hg_2^{-1}\in\Gamma$ for all $h\in\Gamma$.
This implies that two points are $\Gamma$-equivalent if and only if their images under $g_2$ are $\Gamma$-equivalent. Therefore any vertex in $g_2(I(\infty))$ is $\Gamma$-equivalent to some vertex in $I(\infty)$.

It is easily seen that (or, see \cite[Page 406]{Vogtmann})
$$
L_{\lambda_1}^{-1}\,\Gamma(\lambda_1)\,L_{\lambda_1}= \left\{\begin{pmatrix} -2 & -4x\\ 0& -2 \end{pmatrix}\,:\, x \in \langle \omega, 2 \rangle^{-2}\right\}.
$$
We now describe the action of $L_{\lambda_1}^{-1}\,\Gamma(\lambda_1)\, L_{\lambda_1}$ on the vertices of $I_{\lambda_1}$ in \mbox{Figure \ref{fig extreme points/SL_2}.}
It is a simple matter to check that any vertex in $I_{\lambda_1}$ is equivalent under $L_{\lambda_1}^{-1}\,\Gamma(\lambda_1)\,L_{\lambda_1}$ to one of $\{Q_1,Q_2,Q_5,Q_6\}$ (see Fig. 3). One can compute that $L_{\lambda_1}(Q_1)=P_9$, $L_{\lambda_1}(Q_2)=P_{10}$, $L_{\lambda_1}(Q_5)=P_5$ and $L_{\lambda_1}(Q_6)=P_7$.  This tells us that each of the vertices of $L_{\lambda_1}(I_{\lambda_1})$ is equivalent under $\Gamma$ to a point in $I(\infty)$ and therefore to a point in $\{P_1, P_2, P_5, P_6, P_7, P_9\}$. The same proof works for $\lambda_3$.

In conclusion, the points $P_1, P_2, P_5, P_6, P_7, P_9$ are the projective extreme points up to $\Gamma=\mathrm{SL}_2(\mathcal O_K)$-equivalence.
These points contain all the necessary information to obtain the projective Hermite constant $\gamma_K^p$ and the projective extreme forms.
By \eqref{eq2:1st-red-Mendoza}, $\gamma_K^p = \max_{P\in I} \eta_K(P)$, where $\eta_K(P)$ is given by \eqref{eq eta}.
On the other hand, this maximum is attained at a vertex of $I$.
All the vertices have representatives in $I(\infty)$, and the distance is $\Gamma$-invariant.
Therefore,
\begin{equation*}
  \gamma_K^p = \max_{j\in\{1,2,5,6,7,9\}} \dist(P_j,\infty)=\max_{j\in\{1,2,5,6,7,9\}} \frac{1}{\zeta_j}=\sqrt{\frac{39}3}=\sqrt{13}.
\end{equation*}
The maximum is attained at the vertices $P_2$ and $P_5$.  By applying the map $\Psi=\Psi_1$ in Theorem~\ref{Thm bijection H3 and hermitian forms}, we obtain the two associated absolutely projective extreme forms of determinant one:
$$\Psi(P_2) = \begin{pmatrix} \sqrt{13}&-\sqrt{12}\mi\\ -\sqrt{12}\mi&\sqrt{13} \end{pmatrix},\quad
\Psi(P_5) =  \begin{pmatrix} \sqrt{13}& \sqrt{12}\mi\\ \sqrt{12}\mi&\sqrt{13} \end{pmatrix}.$$
Both of them attain their projective minima at $\left(\begin{smallmatrix}1\\0 \end{smallmatrix}\right)$, which is the vector associated with the cusp $\infty=1/0$.

%The above procedure to obtain the projective Hermite constant and the projective extreme forms works similarly for any imaginary quadratic field $K$.

\section{Comparison with other Hermite constants}\label{sec5:comparison}

In this section we compare the constants $\gamma_K$ and $\gamma_K^p$ given in \eqref{eq1:gamma_K} and \eqref{eq1:gamma_K^p} with other generalizations of the Hermite constant for any number field $K$.  From now on, $K$ is an arbitrary number field of degree $m=r+2s$ and $\mathcal O_K$ is the ring of integers in $K$.  The discriminant and the class number of $K$ are denoted by $d_K$ and $h_K$ respectively.
Let $\{\sigma_1, \ldots, \sigma_m\}$ be the set of embeddings of $K$ into the complex numbers.
Our convention is that $\sigma_1, \ldots, \sigma_r$ are real and $\sigma_{r+1}, \ldots, \sigma_{r+2s}$ are complex with $\sigma_{r+j} = \overline{\sigma_{r+s+j}}$ for $j = 1, \ldots, s$.
%We shall write $\Q[\sqrt{-D}]$ when we refer to an imaginary quadratic number field.

An $(r+s)$-tuple $S=(S_1,\dots,S_{r+s})$, where $S_1,\dots, S_r$ are $n\times n$ positive definite real symmetric  matrices and $S_{r+1},\dots,S_{r+s}$ are $n\times n$ positive definite hermitian matrices, is called a \emph{Humbert form} over $K$ of dimension $n$.
The set of all such forms is denoted by $\mathcal P_{K,n}$.  Let $S$ be an $n$-dimensional Humbert form over $K$.
For $x\in\mathcal O_K^n$,
$$
S[x]:=\prod_{i=1}^r S_i[x^{\sigma_i}]\cdot \left(\prod_{i=r+1}^{r+s} S_i[x^{\sigma_i}]\right)^2,
$$
where $x^{\sigma_i}$ is the column vector with coordinates $x_1^{\sigma_i},\dots,x_n^{\sigma_i}$ and $S_i[x]=x^*S_ix$ with $x^*$ being the conjugate transpose of $x$.   The determinant of $S$ is defined as
$$\det(S)=\prod_{i=1}^r \det S_i\cdot \left(\prod_{i=r+1}^{r+s} \det S_i\right)^2.$$

In \cite{I}, Icaza defines the $n$-dimensional \emph{Hermite-Humbert constant} of $K$ by
\begin{equation}\label{eq5:cte-Icaza}
\gamma(n,K) = \sup_{S\in\mathcal P_{K,n}} \min_{x\in\mathcal O_K^n\smallsetminus\{0\}} \frac{S[x]}{\det(S)^{1/n}}.
\end{equation}
Humbert reduction theory for positive definite quadratic forms over number fields implies that $\gamma(n,K)<\infty$.
When $K=\Q$, the Hermite-Humbert constant coincides with the classical Hermite constant, which is known for $2\leq n\leq 8$ and for $n=24$. When $n=2$ and $K$ is a totally real quadratic number field, $\gamma(2,K)$ is known in several cases (see \cite{BCIO}, \cite{CIO}, \cite{PohstWagner1}, and \cite{PohstWagner2}).

If $\mathfrak b$ is a fractional ideal of $K$, let $N_K(\mathfrak b)$ be its ideal norm.  The fractional ideal generated by the coordinates of a nonzero vector $x \in K^n$ is denoted by $\langle x \rangle$.  We define the
$n$-dimensional \emph{projective Hermite-Humbert constant} of $K$ to be
\begin{equation}\label{eq5:cte-proj}
\gamma^p(n,K) = \sup_{S\in\mathcal P_{K,n}} \min_{x\in\mathcal O_K^n\smallsetminus\{0\}} \frac{S[x]}{\norm{x}^2\det(S)^{1/n}}.
\end{equation}

It is clear that $\gamma^p(n,K) \leq \gamma(n,K)$ and they coincide when the class number is one.
Furthermore, it follows that when $K$ is an imaginary quadratic field, we have
$$
\gamma(2,K)^{\frac12} = \gamma_K
\quad\text{and}\quad
\gamma^p(2,K)^{\frac12} = \gamma^p_{K}
$$
Thus, $\gamma^p(n,K)^{\frac12}$ can be viewed as the generalization of $\gamma^p_K$ in the context of Humbert forms.

On the other hand, using the twisted height on the Grassmanian manifold, Thunder~\cite{Thunder} defines constants that are analogs of the Hermite constants.   For any place $v$ of $K$, let $K_v$ be the completion of $K$ with respect to $v$, and let $\vert \,\, \vert_v$ be the absolute value on $K_v$ normalized so that $\mu(\alpha C) = \vert \alpha \vert_v\mu(C)$ where $\mu$ is a Haar measure on $K_v$ and $C$ is a compact subset of $K_v$ with nonzero Haar measure.
For any $x\in K_v^n$, the local height $H_v(x)$ is defined by
$$H_v(x) = \left \{
\begin{array}{ll}
\displaystyle{\left(\sum_{i=1}^n \vert x_i\vert_v^2\right)^{1/(2m)}} & \mbox{ if $v$ is real };\\
\displaystyle{\left(\sum_{i = 1}^n \vert x_i \vert_v\right)^{1/m}} & \mbox{ if $v$ is complex };\\
\displaystyle{\left(\max_{1\leq i \leq n} \vert x_i\vert_v\right)^{1/m}} & \mbox{ if $v$ is finite}.
\end{array}
\right.$$
Let $\A$ be the ring of adeles of $K$, and $|\cdot|_\A$ be the adelic norm given by the product over all places.  For any $g \in \textrm{GL}_n(\A)$, the global twisted height $H_g$ is defined by $H_g(x) = \prod_v H_v(gx)$ for any $x \in K^n$.    Now, as is described in \cite{Ohno-Watanabe}, there is a double coset decomposition
$$\text{GL}_n(\A) = \bigcup_{i=1}^{h_K} \textrm{GL}_n(\A_\infty)\lambda_i\textrm{GL}_n(K)$$
with $\lambda_1 \in \text{GL}_n(\A_\infty) = \text{GL}_n(K_\infty)\prod_{v\nmid \infty} \text{GL}_n(\mathcal O_{K_v})$.
Under this notation, Thunder's  constant is defined by
\begin{equation}\label{eq5:cte-Thunder}
\gamma^T(n,K):= \max_{g\in \textrm{GL}_n(K_\infty)} \min_{x\in K^n\smallsetminus\{0\}} \frac{H_{g\lambda_i}(x)^2}{\vert \det g\lambda_i \vert_\A^{2/(nm)}}.
\end{equation}

Let $S$ be an $n$-dimensional Humbert form over $K$.  We may interpret $S$ as a sequence $\{S_v : v \mid \infty\}$, with $S_v$ a positive definite $n\times n$ symmetric or Hermitian matrix, depending on whether  $v$ is real or complex.  Then each $S_v$ can be written as $g_v^*g_v$ for some $g_v \in \text{GL}_n(K_v)$, and $N_K(\langle x \rangle) = \prod_{v\nmid \infty} H_v(x)^{-1}$ for every $x \in K^n\smallsetminus\{0\}$.
Therefore,
\begin{equation}\label{eq5:gamma^p-height}
\gamma^p(n,K) = \max_{g \in \text{GL}_n(K_\infty)} \min_{x \in K^n\smallsetminus\{0\}} \frac{H_g(x)^{2m}}{\vert \det g\vert_\A^{2/n}},
\end{equation}
where $\textrm{GL}_n(K_\infty) = \prod_{v \mid \infty} \textrm{GL}_n(K_v)$ regarded as a subgroup of $\textrm{GL}_n(\A)$.
As a result of comparing \eqref{eq5:cte-Thunder} and \eqref{eq5:gamma^p-height}, we have the inequality
$$\gamma^p(n,K) \leq \gamma^T(n,K)^m,$$
and the equality holds if the field $K$ has class number one.

In \cite{W-Lie, W-Fund},  Watanabe introduces a constant $\gamma^W(G,\pi)$ attached to a connected linear algebraic group $G$ defined over $K$ and an absolutely irreducible strongly $K$-rational representation $\pi$ of $G$,  which can be viewed as an analog of the classical Hermite constants.  Indeed, when $G = \textrm{GL}_n$ and $\pi = \rho$ is the standard representation of $\textrm{GL}_n$, then $\gamma^W(\textrm{GL}_n, \rho)$ is exactly Thunder's constant $\gamma^T(n,K)$.
If we take a maximal $K$-parabolic subgroup $Q$ of $G$ and a representation $\pi: G \rightarrow \textrm{GL}(V_\pi)$ such that the stabilizer of the highest weight line of $\pi$ is $Q$, then
\begin{equation}\label{eq5:cte-Watanabe}
\gamma^W(G,\pi) = \max_{g \in G(\A)^1} \min_{x \in Q(K)\backslash G(K)} H_\pi(xg)^2
\end{equation}
where $H_\pi$ is the global height function on $V_\pi$.

Let $g \in \textrm{GL}_n(K_\infty)$ be given.
For each $v \mid \infty$, there exists $a_v \in K_v$ such that $\det(a_vg_v) = 1$.
In particular, the matrix $h_v := a_vg_v$ is in $\textrm{SL}_n(K_v)$.
Define $h \in \textrm{SL}_n(K_\infty) \leq \textrm{SL}_n(\A)$ by setting $h_v = a_vg_v$ for all $v \mid \infty$ and, of course, $h_v = 1$ for all $v \nmid \infty$.
Then for each $v \mid \infty$,
$$
H_{h_v}(x)^2 = H_v(a_vg_vx)^{2/m} = \vert a_v\vert_v^{2/m} H_v(g_v x)^{2/m} = \frac{H_{g_v}(x)^2}{\vert \det g_v \vert_v^{2/(nm)}}.
$$
Since $\textrm{SL}_n$ has the strong approximation property (assuming $n \geq 2$), therefore $\textrm{SL}_n(\A) = \textrm{SL}_n(\A_\infty)\textrm{SL}_n(K)$.
As a result, we can rewrite \eqref{eq5:gamma^p-height} as
\begin{equation}\label{eq5:proj-Watanabe}
\gamma^p(n,K) = \max_{h \in \textrm{SL}_n(\A)} \min_{x\in K^n\smallsetminus\{0\}} H_h(x)^{2m}.
\end{equation}

If we specialize \eqref{eq5:cte-Watanabe} to the group $G = \textrm{SL}_n$ and $\rho$ the natural representation of $\textrm{SL}_n$, then, in view of \eqref{eq5:proj-Watanabe} and the observation that $\textrm{SL}_n(\A) = \textrm{SL}_n(\A)^1$ because $\textrm{SL}_n$ is semisimple, we have
$$
\gamma^p(n,K) = \gamma^W({\textrm{SL}_n},\rho)^m.
$$
We therefore obtain the exact values of $\gamma^W(\textrm{SL}_2,\rho)$ for imaginary quadratic fields covered by Theorem~\ref{thm1:ctes}.

\section*{Acknowledgements}
The research of the second author was partially supported by FONDECYT 1080015 and CONICYT ACT 56.  This author also extends her thanks to the Department of Mathematics and Computer Science of Wesleyan University for their hospitality during her visits funded by the Van Vleck Research Fund.  The third author would like to thank Universidad de Talca in Chile for their generous hospitality during three visits in 2009.  The authors thank Roberto Miatello and Takao Watanabe for their helpful comments and discussion.

\end{document}